\newtheorem{theorem}{Theorem}[section]
\newtheorem{corollary}[theorem]{Corollary}
\newtheorem{lemma}[theorem]{Lemma}
\newtheorem{assumption}[theorem]{Assumption}
\newtheorem{remark}[theorem]{Remark}
\numberwithin{equation}{section}  
  \newcounter{mnote}
  \let\oldmarginpar\marginpar
    \renewcommand\marginpar[1]{\-\oldmarginpar[\raggedleft\footnotesize #1]%
    {\raggedright\footnotesize #1}}
\definecolor{myblue}{rgb}{0.2,0.2,0.7}
\definecolor{mygreen}{rgb}{0,0.6,0}
\definecolor{mycyan}{rgb}{0,0.6,0.6}
\definecolor{myred}{rgb}{0.9,0.2,0.2}
\definecolor{mymagenta}{rgb}{0.9,0.2,0.9}
\definecolor{mywhite}{rgb}{1.0,1.0,1.0}
\definecolor{myblack}{rgb}{0.0,0.0,0.0}
\newcommand{\beq}{\begin{equation}}
\newcommand{\eeq}{\end{equation}}
\newcommand{\beqa}{\begin{eqnarray}}
\newcommand{\eeqa}{\end{eqnarray}}
\newcommand{\bb}{{\bf b}}
\def\calg#1{{\mathcal #1}}
\def\cal{\mathcal}
\def\size{\mathrm{diam}}
\def\supp{\mathrm{supp}}
\newcommand{\Na}{{\bb N}}
\begin{document}

\title[Optimality of multilevel preconditioners for local 3D refinement]
      {Optimality of multilevel preconditioners\\
       for local mesh refinement in three dimensions}

\author[B. Aksoylu]{Burak Aksoylu}
\email{burak@cct.lsu.edu}
\address{Department of Mathematics,
         Louisiana State University,
         Baton Rouge, LA 70803, USA}
\address{Center for Computation and Technology,
         Louisiana State University,
         Baton Rouge, LA 70803, USA}
\thanks{The first author was supported in part by the Burroughs
Wellcome Fund, in part by NSF (ACI-9721349, DMS-9872890), and in part
by DOE (W-7405-ENG-48/B341492). Other support was provided by Intel,
Microsoft, Alias$|$Wavefront, Pixar, and the Packard Foundation.}

\author[M. Holst]{Michael Holst}
\email{mholst@math.ucsd.edu}
\address{Department of Mathematics\\
         University of California at San Diego\\ 
         La Jolla, CA 92093, USA}
\thanks{The second author was supported in part by NSF 
(CAREER Award~DMS-9875856 and standard grants DMS-0208449, DMS-9973276, 
DMS-0112413), in part by DOE (SCI-DAC-21-6993), and in part by a 
Hellman Fellowship.}

\date{December 12, 2005}

\keywords{finite element approximation theory, multilevel preconditioning, BPX, hierarchical bases, wavelets, three dimensions, local mesh refinement, red-green refinement}

\begin{abstract}
In this article, we establish optimality of the 
Bramble-Pasciak-Xu (BPX) norm equivalence and optimality of the
wavelet modified (or {\em stabilized}) hierarchical basis (WHB)
preconditioner in the setting of local 3D mesh refinement.
In the analysis of WHB methods, a critical first step
is to establish the optimality of BPX norm equivalence for the
refinement procedures under consideration.  While the available
optimality results for the BPX norm have been constructed primarily in
the setting of uniformly refined meshes, a notable exception is the
local 2D red-green result due to Dahmen and Kunoth.  The purpose of
this article is to extend this original 2D optimality result to the
local 3D red-green refinement procedure introduced by
Bornemann-Erdmann-Kornhuber (BEK), and then to use this result to
extend the WHB optimality results from the quasiuniform setting to
local 2D and 3D red-green refinement scenarios.
The BPX extension is reduced to establishing that locally enriched
finite element subspaces allow for the construction of a scaled basis
which is formally Riesz stable.  This construction turns out to rest
not only on shape regularity of the refined elements, but also
critically on a number of geometrical properties we establish between
neighboring simplices produced by the BEK refinement procedure.
It is possible to show that the number of degrees of freedom used for
smoothing is bounded by a constant times the number of degrees of freedom
introduced at that level of refinement, indicating that a practical
implementable version of the resulting BPX preconditioner for the BEK
refinement setting has provably optimal (linear) computational
complexity per iteration.  An interesting implication of the
optimality of the WHB preconditioner is the {\em a priori}
$H^1$-stability of the $L_2$-projection.  The existing 
{\em a posteriori} approaches in the literature dictate a 
reconstruction of the mesh if such conditions cannot be satisfied.
The theoretical framework employed supports arbitrary spatial 
dimension $d \geq 1$ and requires no coefficient smoothness 
assumptions beyond those required for well-posedness in $H^1$.
\end{abstract}

\maketitle


\vspace*{-1.2cm}
{\footnotesize
\tableofcontents
}


\section{Introduction}

In this article, we analyze the impact of local mesh refinement on the
stability of multilevel finite element spaces and on optimality
(linear space and time complexity) of multilevel preconditioners.
Adaptive refinement techniques have become a crucial tool for many
applications, and access to optimal or near-optimal multilevel
preconditioners for locally refined mesh situations is of primary
concern to computational scientists.  The preconditioners which can be
expected to have somewhat favorable space and time complexity in such
local refinement scenarios are 
the hierarchical basis (HB) method~\cite{BaDuYs88},
the Bramble-Pasciak-Xu (BPX) preconditioner~\cite{BPX90},
and the wavelet modified (or stabilized)
hierarchical basis (WHB) method~\cite{VaWa1}.
While there are
optimality results for both the BPX and WHB preconditioners in the
literature, these are primarily for quasiuniform meshes and/or two
space dimensions (with some exceptions noted below).  In particular,
there are few hard results in the literature on the optimality of
these methods for various realistic local mesh refinement hierarchies,
especially in three space dimensions. In this article, the first in a
series of two articles~\cite{AkBoHo03} on local refinement and
multilevel preconditioners, we first assemble optimality results for
the BPX norm equivalence in local refinement scenarios in three spacial
dimensions.  Building on the extended BPX results, we then develop
optimality results for the WHB method in local refinement
settings. The material forming this series is based on the first
author's Ph.D. dissertation~\cite{Ak01phd} and comprehensive presentation
of this article can be found 
in~\cite{AkBoHo04-techReport,AkHo05-techReportI,AkHo05-techReportII,AkKhSc03}.

Through some topological or geometrical abstraction, if local
refinement is extended to $d$ spatial dimensions, then the main
results are valid for any dimension $d \geq 1$ and for nonsmooth PDE
coefficients $p \in L_{\infty}(\Omega)$. Throughout this article, we
consider primarily the $d=3$ case. But, when the abstraction to
generic $d$ is clear, we simply state the argument by using this
generic $d$.

The problem class we focus on here is linear second order partial
differential equations (PDE) of the form:
\begin{equation}  \label{modelProb}
- \nabla \cdot (p ~ \nabla u) + q ~ u = f,
~~~u = 0 ~~\text{on}~ \partial \Omega.
\end{equation}
Here, $f \in L_2(\Omega)$, $p,q \in L_\infty(\Omega)$, $p:\Omega
\rightarrow L(\Re^d, \Re^d)$, $q:\Omega \rightarrow \Re$, where $p$ is
a symmetric positive definite matrix function, and where $q$ is a
nonnegative function. Let ${\cal T}_0$ be a shape regular and
quasiuniform initial partition of $\Omega$ into a finite number of $d$
simplices, and generate ${\cal T}_1, {\cal T}_2, \ldots$ by refining
the initial partition using red-green local refinement strategies in
$d=3$ spatial dimensions.  Denote as ${\cal S}_j$ the simplicial
linear $C^0$ finite element space corresponding to ${\cal T}_j$
equipped with zero boundary values. The set of nodal basis functions
for ${\cal S}_j$ is denoted by $\Phi^{(j)} = \{ \phi_i^{(j)}
\}_{i=1}^{N_j}$ where $N_j = \mbox{dim}~ {\cal S}_j$ is equal to the
number of interior nodes in ${\cal T}_j$, representing the number of
degrees of freedom in the discrete space.  Successively refined finite
element spaces will form the following nested sequence:
$$
{\cal S}_0 \subset {\cal S}_1 \subset \ldots \subset {\cal S}_j \subset \ldots
\subset H_0^1(\Omega).
$$

Let the bilinear form and the functional associated with
the weak formulation of~(\ref{modelProb}) be denoted as
$$
a(u,v) = \int_{\Omega} p~ \nabla u \cdot \nabla v + q~u~v ~dx,
~~~b(v) = \int_{\Omega} f~v~dx,~~~u,v \in H_0^1(\Omega).
$$
We consider primarily the following Galerkin formulation: Find
$u \in {\cal S}_j$, such that
\begin{equation} \label{FEproblem}
a(u,v) = b(v),~~~\forall v \in {\cal S}_j.
\end{equation}
The finite element approximation in ${\cal S}_j$ has the form
$
u^{(j)} = \sum_{i=1}^{N_j} u_i \phi_i^{(j)},
$
where $u=(u_1, \ldots, u_{N_j})^T$ denotes the coefficients of $u^{(j)} $ with
respect to $\Phi^{(j)}$.
The resulting {\em discretization operator} 
$A^{(j)}=\{a(\phi_k^{(j)},\phi_l^{(j)})\}_{k,l=1}^{N_j}$
must be inverted numerically to determine the coefficients 
$u$ from the linear system:
\begin{equation} \label{MATproblem}
A^{(j)} u = F^{(j)},
\end{equation}
where $F^{(j)} =\{b(\phi_l^{(j)})\}_{l=1}^{N_j}$.
Our task is to solve~(\ref{MATproblem}) with optimal (linear) complexity
in both storage and computation, where the finite element spaces
${\cal S_j}$ are built on locally refined meshes.

Optimality of the BPX norm equivalence with generic local refinement was
shown by Bramble and Pasciak~\cite{BrPa93}, where the impact of the
local smoother and the local projection operator on the estimates was
carefully analyzed.  The two primary results on optimality of the BPX
norm equivalence in the local refinement settings are due to Dahmen and
Kunoth~\cite{DaKu92} and Bornemann and Yserentant~\cite{BoYs93}. Both
works consider only two space dimensions, and in particular, the
refinement strategies analyzed are restricted 2D red-green refinement
and 2D red refinement, respectively.  In this paper, we extend the
framework developed in~\cite{DaKu92} to a practical, implementable 3D
local red-green refinement procedure introduced by
Bornemann-Erdmann-Kornhuber (BEK)~\cite{BoErKo}.  We will refer to
this as the BEK refinement procedure.

HB methods~\cite{BaDuYs88,BaActa96,Ys86}
are particularly attractive in the local refinement setting
because (by construction) each iteration has linear (optimal)
computational and storage complexity.  Unfortunately, the resulting
preconditioner is not optimal due to condition number growth: in two
dimensions the growth is slow, and the method is quite effective
(nearly optimal), but in three dimensions the condition number grows
much more rapidly with the number of unknowns~\cite{On89}.
To address this
instability, one can employ $L_2$-orthonormal wavelets in place of the
hierarchical basis giving rise to an optimal preconditioner~\cite{Ja92}.
However, the complicated nature of traditional wavelet bases, in
particular the non-local support of the basis functions and
problematic treatment of boundary conditions, severely limits
computational feasibility.  WHB methods have been
developed~\cite{VaWa2,VaWa1} as an alternative, and they can be
interpreted as a wavelet modification (or {\em stabilization}) of the
hierarchical basis.  These methods have been shown to optimally
stabilize the condition number of the systems arising from
hierarchical basis methods on quasiuniform meshes in both two and
three space dimensions, and retain a comparable cost per iteration.

There are two main results and one side result in this article.  The
main results establish the optimality of the BPX norm equivalence and
also optimality of the WHB preconditoner---as well as optimal
computational complexity per iteration---for the resulting locally
refined 3D finite element hierarchy.  Both the BPX and WHB
preconditioners under consideration are additive Schwarz
preconditioners.  The BPX analysis here heavily relies on the
techniques of the Dahmen-Kunoth~\cite{DaKu92} framework and can be
seen as an extension to three spatial dimensions with the realistic
BEK refinement procedure~\cite{BoErKo} being the application of
interest.  The WHB framework relies on the optimality of the BPX norm
equivalence. Hence, the WHB results are established after the BPX
results.

The side result is the $H^1$-stability of $L_2$-projection onto finite
element spaces built through the BEK local refinement procedure.  This
question is currently under intensive study in the finite element
community due to its relationship to multilevel preconditioning. The
existing theoretical results, due primarily to Carstensen~\cite{Ca00}
and Bramble-Pasciak-Steinbach~\cite{BrPaSt00} involve {\em a
posteriori} verification of somewhat complicated mesh conditions after
local refinement has taken place.  If such mesh conditions are not
satisfied, one has to redefine the mesh.  However, an interesting
consequence of the BPX optimality results for locally refined 2D and
3D meshes established here is $H^1$-stability of $L_2$-projection
restricted to the same locally enriched finite element spaces. This
result appears to be the first {\em a priori} $H^1$-stability result
for $L_2$-projection on finite element spaces produced by practical
and easily implementable 2D and 3D local refinement procedures.

{\bf\em Outline of the paper.}  
In \S\ref{sec:normEquivalence}, we introduce some basic
approximation theory tools used in the analysis such as Besov spaces
and Bernstein inequalities. The framework for the main norm
equivalence is also established here. In \S\ref{sec:3DRed-Green}, we
list the BEK refinement conditions. We give several theorems about the
generation and size relations of the neighboring simplices, thereby
establishing local (patchwise) quasiuniformity. This gives rise to an
$L_2$-stable Riesz basis in~\S\ref{sec:stableRieszBasis}; one can then
establish the Bernstein inequality.
In \S\ref{sec:localComplexity}, we explicitly give an upper bound for
the nodes introduced in the refinement region. This implies that one
application of the BPX preconditioner to a function has linear
(optimal) computational complexity.  In \S\ref{sec:optBPX}, we use the
geometrical results from \S\ref{sec:3DRed-Green} to extend the 2D
Dahmen-Kunoth results to the 3D BEK refinement procedure by
establishing the desired norm equivalence.  While it is not possible
to establish a Jackson inequality due to the nature of local
adaptivity, in~\S\ref{sec:reason} the remaining inequality in the norm
equivalence is handled directly using approximation theory tools, as
in the original work~\cite{DaKu92}. In~\S\ref{sec:WHBprecond}, we
introduce the WHB preconditioner as well as the operator used in its
definition. In~\S\ref{sec:assump-stabp}, we state the fundamental
assumption for establishing basis stability and set up the main
theoretical results for the WHB framework, namely, optimality of
the WHB preconditioner in the 2D and 3D local red-green
refinements. The results in~\S\ref{sec:assump-stabp} rest completely
on the BPX results in~\S\ref{sec:optBPX} and on the Bernstein
inequalities, the latter of which rest on the geometrical results
established in~\S\ref{sec:3DRed-Green}. The first {\em a priori}
$H^1$-stability result for $L_2$-projection on the finite element
spaces produced is established in~\S\ref{sec:H1StableL2}. We conclude
in~\S\ref{sec:conclusion}.

\section{Preliminaries and the main norm equivalence}
\label{sec:normEquivalence}
The basic restriction on the refinement procedure is that it remains
\emph{nested}. In other words, tetrahedra of level $j$ which are not
candidates for further refinement will never be touched in the future.
Let $\Omega_j$ denote the refinement region, namely, the union of the
supports of basis functions which are introduced at level $j$.  Due to
nested refinement $\Omega_j \subset \Omega_{j-1}$. Then the following
hierarchy holds:
\begin{equation} \label{eqn:omega}
\Omega_J \subset \Omega_{J-1} \subset \cdots \subset \Omega_0 = \Omega.
\end{equation}

In the local refinement setting, in order to maintain optimal computational 
complexity, the smoother is restricted to a local space
$\tilde{{\cal S}}_j$, typically 
\begin{equation} \label{subset:localSpace}
{\cal S}_j^f \subseteq \tilde{{\cal S}}_j \subset {\cal S}_j,
\end{equation}
where ${\cal S}_j^f := (I_j - I_{j-1})~{\cal S}_{j}$ and 
$I_j:L_2(\Omega) \rightarrow {\cal S}_j$ denotes the finite
element interpolation operator. 
Degrees of freedom (DOF) corresponding to ${\cal S}_j^f$
and $\tilde{{\cal S}}_j$ will be denoted by ${\cal N}_j^f$ and 
$\tilde{\calg{N}}_j$ respectively where $f$ stands for \emph{fine}. 
(\ref{subset:localSpace}) indicates that
${\cal N}_j^f \subseteq \tilde{\calg{N}}_j$,
typically, $\tilde{\calg{N}}_j$ consists of fine DOF and their corresponding 
coarse fathers.

The BPX preconditioner (also known as parallelized or additive
multigrid) is defined as follows:
\begin{equation} \label{id:BPX}
X u := \sum_{j=0}^J 2^{j(d-2)} \sum_{i \in \tilde{\calg{N}}_j}
(u,\phi_i^{(j)}) \phi_i^{(j)}.
\end{equation}
Success of the BPX preconditioner in locally refined regimes 
relies on the fact the BPX smoother acts on a local space as in
(\ref{subset:localSpace}). As mentioned above, it acts on a slightly
bigger set than fine DOF (examples of these are given in~\cite{BrPa92}).
Choice of such a set is crucial because computational cost per
iteration will eventually determine the overall computational complexity
of the method. Hence in~\S\ref{sec:localComplexity}, we show that
the overall computational cost of the smoother is $O(N)$, meaning that 
the BPX preconditioner is optimal per iteration.  We would like to
emphasize that one of the the main goals of this paper, as in the earlier
works of Dahmen-Kunoth~\cite{DaKu92} and Bornemann-Yserentant~\cite{BoYs93}
in the purely two-dimensional case, is to establish the
optimality of the BPX norm equivalence:
\begin{equation} \label{ineq:BPXnormEquiv}
c_1 \sum_{j=0}^J 2^{2j} \|(Q_{j} - Q_{j-1})u\|_{L_2}^2 \leq 
\|u\|_{H^1}^2 \leq 
c_2 \sum_{j=0}^J 2^{2j} \|(Q_{j} - Q_{j-1})u\|_{L_2}^2,
\end{equation}
where $Q_j$ is the $L_2$-projection. 
We note that in the uniform refinement setting, it is straight-forward
to link the BPX norm equivalence to the optimality of the BPX
preconditioner:
$$
c_1 (Xu,u) \leq \|u\|_{H^1}^2 \leq c_2 (Xu,u),
$$
due to the projector relationships between the $Q_j$ operators.
However, in the local refinement scenario the precise link between
the norm equivalence and the preconditioner is more subtle and remains
essentially open.

The rest of this section is dedicated to setting up the framework to 
establish the main norm equivalence (\ref{ineq:BPXnormEquiv}) which 
will be formalized in Theorem~\ref{thm:main} at the end of this section.
We borrow several tools from approximation theory, including the
modulus of smoothness, $\omega_k (f,t,\Omega)_p$, which is a finer scale
of smoothness than differentiability. It is a central tool in the analysis
here and it naturally gives rise to the notion of {\em Besov spaces}. For 
further details and definitions, see~\cite{DaKu92,Os94book}.  Besov 
spaces are defined to be the collection of functions $f \in L_p(\Omega)$
with a finite Besov norm defined as follows:
$$
\|f\|_{B_{p,q}^s(\Omega)}^q :=
\|f\|_{L_p(\Omega)}^q + |f|_{B_{p,q}^s(\Omega)}^q,
$$
where the seminorm is given by
$$
|f|_{B_{p,q}^s(\Omega)} :=
\|\{2^{sj} \omega_k(f,2^{-j},\Omega)_p\}_{j \in \Na_0} \|_{l_q},
$$
with $k$ any fixed integer larger than $s$.

Besov spaces become the primary function space setting in the
analysis by realizing Sobolev spaces as Besov spaces:
$$
H^s(\Omega) \cong B_{2,2}^s(\Omega),~~~s>0.
$$
The primary motivation for employing the Besov space stems from the fact
that the characterization of functions which have a given upper bound for
the error of approximation sometimes calls for a finer scale of smoothness
that provided by Sobolev classes functions.

The Bernstein inequality is defined as:
\begin{equation} \label{ineq:Bernstein}
\omega_{k+1}(u,t)_p \leq c~(\min\{1, t2^J\})^\beta \|u\|_{L_p},~~~
u \in {\cal S}_j,~~j=0,\ldots,J,
\end{equation}
where $c$ is independent of $u$ and $j$. Usually $k=$ degree of the
element and in the case of linear finite elements $k=1$. Here $\beta$
is determined by the global smoothness of the approximation space as
well as $p$. For $C^r$ finite elements, $\beta =
\min\{1+r+\frac{1}{p}, k+1\}$.

Let $\theta_J$ be defined as follows.
\begin{equation} \label{def:lowerBound}
\theta_{j,J} := \sup_{u \in {\cal S}_J}
\frac{ \|u - Q_j u \|_{L_2} } { \omega_2 (u,2^{-j})_2 },~~~
\theta_J := \max \left\{ 1, \theta_{j,J}: j=0, \ldots, J \right\}.
\end{equation}
Following~\cite{DaKu92} we have then
\begin{theorem} \label{thm:main}
Suppose the Bernstein inequality (\ref{ineq:Bernstein}) holds for some
real number $\beta>1$. Then, for each $0<s<\min\{\beta, 2\}$, there
exist constants $0<c_1,~c_2 <\infty$ independent of $u \in {\cal
S}_J,~J=0,1,\ldots$, such that the following norm equivalence holds:
\begin{equation} \label{mainNormEquiv}
\frac{c_1}{ \theta_J^2 } \sum_{j=0}^J 2^{2j} \|(Q_{j} - Q_{j-1})u\|_{L_2}^2
\leq \|u\|_{H^1}^2  \leq c_2 
\sum_{j=0}^J 2^{2j} \|(Q_{j} - Q_{j-1})u\|_{L_2}^2,~~~u \in {\cal S}_J.
\end{equation}
\end{theorem}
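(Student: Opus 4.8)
The plan is to deduce \eqref{mainNormEquiv} from the Littlewood--Paley-type characterization of Besov spaces, exploiting the identification $H^s(\Omega)\cong B_{2,2}^s(\Omega)$ recalled above with the choice $s=1$; since the hypothesis $\beta>1$ guarantees $1\in(0,\min\{\beta,2\})$, it suffices to prove, for every $s$ in this range, the equivalence between $|u|_{B_{2,2}^s(\Omega)}^2+\|u\|_{L_2}^2$ and the weighted detail sum $\sum_{j=0}^{J}2^{2sj}\|d_j\|_{L_2}^2$, where $d_j:=(Q_j-Q_{j-1})u$ and $Q_{-1}:=0$, the displayed statement being the case $s=1$. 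Throughout I would use that the $Q_j$ are nested $L_2$-orthogonal projections, so the details $d_j$ are mutually orthogonal and $u=\sum_{j=0}^{J}d_j$ for $u\in\mathcal{S}_J$; in particular $\|u\|_{L_2}^2=\sum_j\|d_j\|_{L_2}^2\le\sum_j 2^{2sj}\|d_j\|_{L_2}^2$ is already dominated by the detail sum, so only the seminorm requires genuine estimation.

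For the upper estimate $\|u\|_{H^1}^2\lesssim\sum_j 2^{2sj}\|d_j\|_{L_2}^2$ (the inequality carrying the $\theta_J$-independent constant $c_2$), I would control the seminorm $\sum_{l\ge0}2^{2sl}\omega_2(u,2^{-l})_2^2$. Substituting $u=\sum_j d_j$ and using subadditivity of the modulus gives $\omega_2(u,2^{-l})_2\le\sum_j\omega_2(d_j,2^{-l})_2$, and I split the inner sum at $j=l$. For the coarse details $j\le l$, the Bernstein inequality \eqref{ineq:Bernstein} applied to the level-$j$ function $d_j\in\mathcal{S}_j$ supplies geometric decay of $\omega_2(d_j,2^{-l})_2$ in $l-j$ at rate $\beta$, while for the fine details $j>l$ the saturated bound $\omega_2(d_j,2^{-l})_2\lesssim\|d_j\|_{L_2}$ suffices. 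Setting $a_j:=2^{sj}\|d_j\|_{L_2}$, the two regimes present $2^{sl}\omega_2(u,2^{-l})_2$ as a discrete convolution $(g\ast a)(l)$ whose kernel $g$ decays geometrically on both tails precisely because $0<s<\beta$; the discrete Young (Hardy) inequality $\|g\ast a\|_{\ell^2}\le\|g\|_{\ell^1}\,\|a\|_{\ell^2}$ then yields $\sum_l 2^{2sl}\omega_2(u,2^{-l})_2^2\lesssim\sum_j 2^{2sj}\|d_j\|_{L_2}^2$. This is the step in which $\beta>1$ is used, through $s=1<\beta$.

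For the lower estimate I would read the factor $\theta_J^{-2}$ directly out of the definition \eqref{def:lowerBound}. Writing $d_j=(u-Q_j u)-(u-Q_{j-1}u)$ and invoking the definition of $\theta_{j,J}$ gives $\|d_j\|_{L_2}\le\|u-Q_j u\|_{L_2}+\|u-Q_{j-1}u\|_{L_2}\le\theta_J\bigl(\omega_2(u,2^{-j})_2+\omega_2(u,2^{-(j-1)})_2\bigr)$. The elementary dilation property $\omega_2(u,2t)_2\le 4\,\omega_2(u,t)_2$ of the second-order modulus then yields $\|d_j\|_{L_2}\lesssim\theta_J\,\omega_2(u,2^{-j})_2$, so that $\sum_j 2^{2sj}\|d_j\|_{L_2}^2\lesssim\theta_J^2\sum_j 2^{2sj}\omega_2(u,2^{-j})_2^2\le\theta_J^2\,|u|_{B_{2,2}^s(\Omega)}^2\lesssim\theta_J^2\,\|u\|_{H^1}^2$ for $s=1$. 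Rearranging produces the left inequality of \eqref{mainNormEquiv} with constant $c_1/\theta_J^2$.

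The substantive obstacle is neither summation above---both are standard once their ingredients are in hand---but rather the hypothesis \eqref{ineq:Bernstein} itself: the entire estimate rests on a Bernstein inequality with exponent $\beta>1$ holding for the finite element spaces produced by local red--green refinement. Establishing that inequality is exactly what is not routine in the locally refined regime, since it presupposes an $L_2$-stable (Riesz) nodal basis, which in turn requires local (patchwise) quasiuniformity of the BEK meshes; this is why the geometric generation and size relations between neighboring simplices must be secured first. For the present theorem, however, \eqref{ineq:Bernstein} is assumed, so the remaining care is bookkeeping: handling the coarsest level $j=0$ and the $L_2$ term (absorbed by orthogonality, as above), and checking that every constant is independent of $u$ and $J$---the latter because the kernel norm $\|g\|_{\ell^1}$ and the modulus dilation constant depend only on $s$ and $\beta$, not on the refinement depth.
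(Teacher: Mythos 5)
Your proposal is correct, and it is essentially the same argument that underlies the paper's proof, which is simply a citation to Dahmen--Kunoth: the upper bound comes from the Bernstein inequality plus a discrete Young (Hardy-type) convolution estimate on the $B^1_{2,2}$ seminorm of the orthogonal decomposition $u=\sum_j(Q_j-Q_{j-1})u$, and the lower bound is read off from the definition (\ref{def:lowerBound}) of $\theta_J$ together with the dilation property of $\omega_2$, exactly as you do. One small note: you correctly interpret the Bernstein hypothesis (\ref{ineq:Bernstein}) as carrying the factor $(\min\{1,t2^{j}\})^{\beta}$ for $u\in{\cal S}_j$ (the exponent $J$ printed there is a typo), which is precisely what makes your two-sided geometric kernel summable and the constants independent of $J$.
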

\begin{proof}
See~\cite[Theorem 4.1]{DaKu92}.
\end{proof}

We would like to elaborate on the difficulities one faces within the
local refinement framework. In order Bernstein inequality to hold,
one needs to establish that the underlying basis is $L_2$-stable Riesz
basis as in (\ref{id:sBasisOswaldEquiv}). This crucial property
heavily depends on local quasiuniformity of the mesh. Hence, Bernstein
inequality is established in \S\ref{sec:optBPX} through local
quasiuniformity and $L_2$-stability of the basis in the Riesz sense.

A Jackson-type inequality cannot hold in a local refinement setting. This
poses a major difficulty in the analysis because one has to calculate
$\theta_J$ directly. The missing crucial piece of the optimal norm
equivalence in (\ref{mainNormEquiv}), namely, $\theta_J = O(1)$ as $J
\rightarrow \infty$, will be shown in (\ref{mainResult}) so that
(\ref{ineq:BPXnormEquiv}) holds. This required the operator
$\tilde{Q}_j$ to be bounded locally and to fix polynomials of degree 1
as will be shown in~\S\ref{sec:reason}.

\section{The BEK refinement procedure}
\label{sec:3DRed-Green}

Our interest is to show optimality of the BPX norm equivalence for the
local 3D red-green refinement introduced by
Bornemann-Erdmann-Kornhuber~\cite{BoErKo}.  This 3D red-green
refinement is practical, easy to implement, and numerical experiments
were presented in~\cite{BoErKo}. A similar refinement procedure was
analyzed by Bey~\cite{Be95}; in particular, the same green closure
strategy was used in both papers.  While these refinement procedures
are known to be asymptotically non-degenerate (and thus produce shape
regular simplices at every level of refinement), shape regularity is
insufficient to construct a stable Riesz basis for finite element
spaces on locally adapted meshes.  To construct a stable Riesz basis
we will need to establish patchwise quasiuniformity as
in~\cite{DaKu92}; as a result, $d$-vertex adjacency relationships that
are independent of shape regularity of the elements must be
established between neighboring tetrahedra as done in~\cite{DaKu92}
for triangles.

We first list a number of geometric assumptions we make concerning the
underlying mesh. Let $\Omega \subset \Re^3$ be a polyhedral domain.
We assume that the triangulation ${\cal T}_j$ of $\Omega$ at level $j$
is a collection of tetrahedra with mutually disjoint interiors which
cover $\Omega = \bigcup_{\tau \in {\cal T}_j } \tau$.  We want to
generate successive refinements ${\cal T}_0,{\cal T}_1, \ldots$ which
satisfy the following conditions:
\begin{assumption} \label{assump:rg1}
{\bf Nestedness:} Each tetrahedron (son) $\tau \in {\cal T}_j$ is
covered by exactly one tetrahedron (father) $\tau^\prime \in {\cal
T}_{j-1}$, and any corner of $\tau$ is either a corner or an edge
midpoint of $\tau^\prime$.
\end{assumption}
\begin{assumption} \label{assump:rg2}
{\bf Conformity:} The intersection of any two tetrahedra
$\tau,\tau^\prime \in {\cal T}_j$
is either empty, a common vertex, a common edge or a common face.
\end{assumption}
\begin{assumption} \label{assump:rg3}
{\bf Nondegeneracy:} The interior angles of all tetrahedra in the
refinement sequence ${\cal T}_0, {\cal T}_1, \ldots$ are bounded away
from zero.
\end{assumption}

\begin{figure}[htbp]
\centering{\includegraphics[width=150pt]{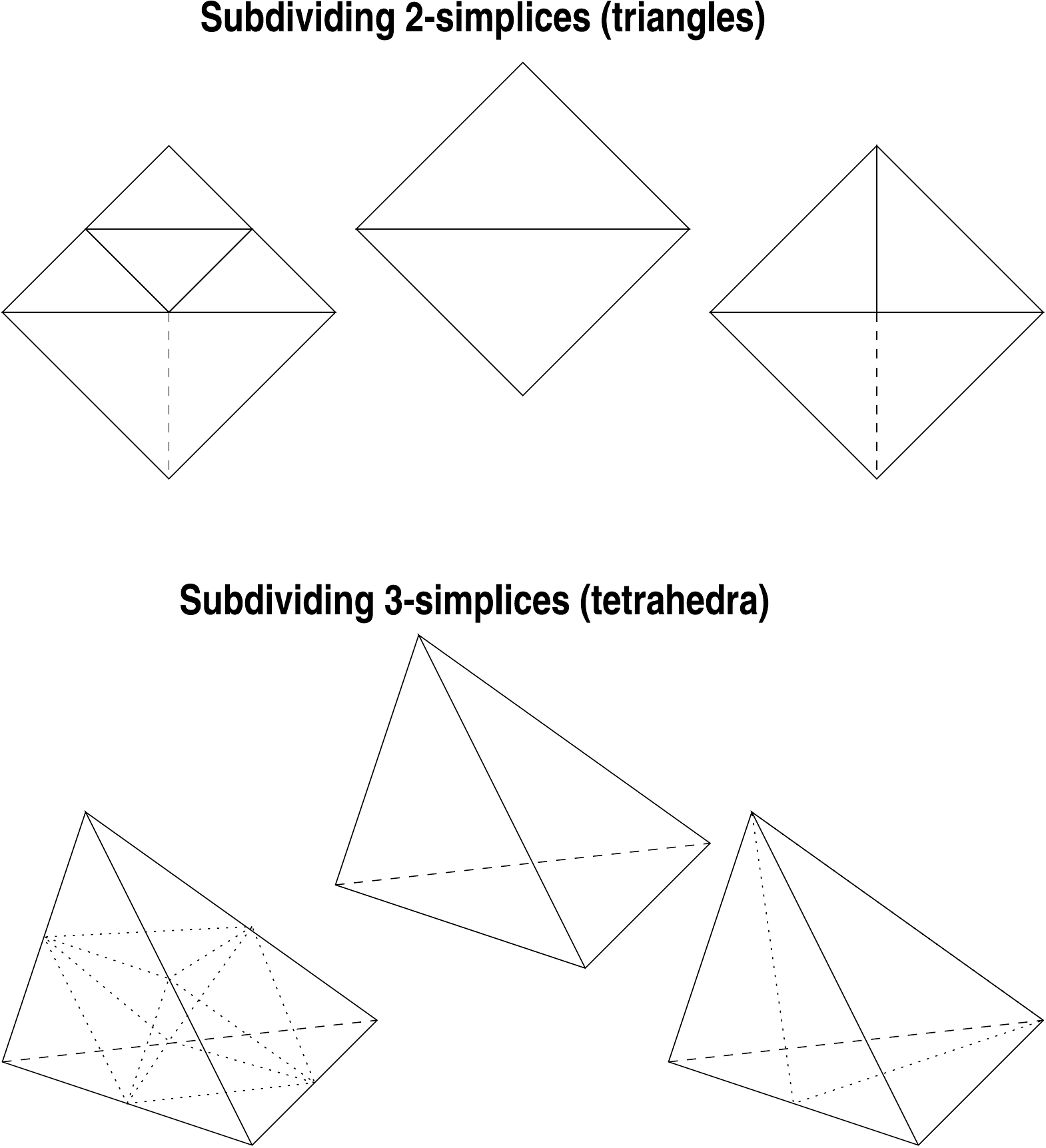}}
\caption{\label{fig:refinement}}
\end{figure}

A regular (red) refinement subdivides a tetrahedron $\tau$ into 8
equal volume subtetrahedra. We connect the edges of each face as in 2D
regular refinement.  We then cut off four subtetrahedra at the corners
which are congruent to $\tau$.  An octahedron with three
parallelograms remains in the interior. Cutting the octahedron along
the two faces of these parallelograms, we obtain four more
subtetrahedra which are not necessarily congruent to $\tau$. We choose
the diagonal of the parallelogram so that the successive refinements
always preserve nondegeneracy~\cite{Ak01phd,Be95,On94,Zh88}. A sketch
of regular refinement (octasection and quadrasection in 3D and 2D,
respectively) as well as bisection is given in Figure~\ref{fig:refinement}.

If a tetrahedron is marked for regular refinement, the resulting
triangulation violates conformity A.\ref{assump:rg2}. Nonconformity is
then remedied by irregular (green) refinement. In 3D, there are
altogether $2^6=64$ possible edge refinements, of which $62$ are
irregular.  One must pay extra attention to irregular refinement in
the implementation due to the large number of possible nonconforming
configurations.  Bey~\cite{Be95} gives a methodical way of handling
irregular cases. Using symmetry arguments, the $62$ irregular cases
can be divided into $9$ different types.  To ensure that the interior
angles remain bounded away from zero, we enforce the following
additional conditions.  (Identical assumptions were made
in~\cite{DaKu92} for their 2D refinement analogue.)

\begin{assumption} \label{assump:rg4}
Irregular tetrahedra are not refined further.
\end{assumption}
\begin{assumption} \label{assump:rg5}
Only tetrahedra $\tau \in {\cal T}_j$ with $L(\tau)=j$ are refined for
the construction of ${\cal T}_{j+1}$, where
$
L(\tau) = \min \left\{j:\tau \in {\cal T}_j \right\}
$
denotes the level of $\tau$.
\end{assumption}

One should note that the restrictive character of A.\ref{assump:rg4}
and A.\ref{assump:rg5} can be eliminated by a modification on the
sequence of the tetrahedralizations~\cite{Be95}.
On the other hand, it is straightforward to enforce both assumptions
in a typical local refinement algorithm by minor modifications of the
supporting datastructures for tetrahedral elements (cf.~\cite{Hols2001a}).
In any event, the proof technique (see (\ref{defn:dinosaurSet})
and (\ref{id:q-interFixes2})) requires both assumptions hold.
The last refinement condition enforced for the possible $62$ irregularly
refined tetrahedra is stated as the following.

\begin{assumption} \label{assump:rg6}
If three or more edges are refined and do not belong to a common face,
then the tetrahedron is refined regularly.
\end{assumption}

We note that the $d$-vertex adjacency generation bound for simplices
in $\Re^d$ which are adjacent at $d$ vertices {\em is the primary
result required in the support of a basis function so that
(\ref{id:stableBasisOswald}) holds, and depends delicately on the
particular details of the local refinement procedure rather than on
shape regularity of the elements.}  The generation bound for simplices
which are adjacent at $d-1,d-2, \ldots$ vertices follows by using the
shape regularity and the generation bound established for $d$-vertex
adjacency.  We provide rigorous generation bounds for all the
adjacency types mentioned in the lemmas to follow when $d=3$.  The 2D
version appeared in~\cite{DaKu92}; the 3D extension is as described
below.
\begin{lemma}
Let $\tau$ and $\tau^\prime$ be two tetrahedra in ${\cal T}_j$ sharing
a common face $f$. Then
\begin{equation} \label{ineq:face-adjacent}
|L(\tau)-L(\tau^\prime)| \leq 1.
\end{equation}
\end{lemma}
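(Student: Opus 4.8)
The plan is to argue by contradiction. Assume without loss of generality that $L(\tau) \le L(\tau')$ and, aiming for a contradiction, that $L(\tau') \ge L(\tau) + 2$; write $m := L(\tau)$. The starting observation is that, by the level convention together with the refinement rule in Assumption~\ref{assump:rg5}, a simplex is refined (if at all) only at the single step immediately following its creation; hence a tetrahedron of level $m$ that lies in $\mathcal{T}_j$ is never touched after level $m$, so its faces survive intact. In particular $f$ is already a full face of $\tau$ in the conforming triangulation $\mathcal{T}_m$.

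First I would locate the simplex sitting on the far side of $f$ at level $m$. Since $\tau'$ lies on the opposite side of $f$ from $\tau$, the face $f$ is interior, and conformity (Assumption~\ref{assump:rg2}) of $\mathcal{T}_m$ supplies a unique $\sigma \in \mathcal{T}_m$ with $\sigma \ne \tau$ meeting $\tau$ exactly in $f$. Because later refinements only subdivide, $\tau'$ must be a descendant of $\sigma$, i.e. $\tau' \subseteq \sigma$. A short argument using Assumption~\ref{assump:rg5} then pins down $L(\sigma) = m$: certainly $L(\sigma) \le m$ since $\sigma \in \mathcal{T}_m$, while if $L(\sigma) < m$ then $\sigma$ would be unrefined forever and would itself be the unique neighbor across $f$, forcing $\tau' = \sigma$ and $L(\tau') \le m$, contrary to $L(\tau') \ge m+2$. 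Since $\tau' \subsetneq \sigma$, the simplex $\sigma$ is genuinely refined at the step $m \to m+1$.

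The heart of the argument is to track what the refinement of $\sigma$ does to the face $f$. When $\sigma$ is subdivided, $f$ is partitioned into the traces of the faces of the level-$(m+1)$ sons; the son $\rho$ that contains $\tau'$ inherits the portion of $f$ on $\partial\rho$, and this portion must contain the full face $f$ of $\tau'$, hence equals all of $f$. Thus the refinement of $\sigma$ leaves $f$ undivided. I would then invoke the explicit geometry of the BEK rules: a regular (red) refinement quadrasects every face, so leaving $f$ whole is possible only under an irregular (green) refinement, whose sons are by construction irregular tetrahedra. By Assumption~\ref{assump:rg4} such a son $\rho$ is never refined again. But $L(\tau') \ge m+2$ forces $\tau' \subsetneq \rho$, so $\rho$ would have to be refined --- the contradiction. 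Exchanging the roles of $\tau$ and $\tau'$ then gives $|L(\tau) - L(\tau')| \le 1$.

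The main obstacle, and the step I would write out most carefully, is the face-partition bookkeeping in the last paragraph: one must verify rigorously that in every admissible subdivision of $\sigma$ a full face of a son coincides with a full face of $\sigma$ only in the irregular case, using the red/green subdivision patterns of the BEK procedure --- in particular checking that even the corner subtetrahedra congruent to $\sigma$ produced by red refinement contribute only quarter-faces of $f$. The remaining ingredients (conformity yielding a unique neighbor, the identity $L(\sigma)=m$ from Assumption~\ref{assump:rg5}, and the ``irregular tetrahedra are terminal'' step from Assumption~\ref{assump:rg4}) are routine once this subdivision dichotomy is secured.
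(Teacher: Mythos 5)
Your proposal is correct and is essentially the paper's own argument: both proofs rest on conformity (A.\ref{assump:rg2}) at a coarser level to force $f$ to be a full common face of $\tau$ and an ancestor of $\tau'$, on the fact that regular (red) refinement subdivides every face of the father (so a son containing a whole face of its father can only arise from a green refinement), and on A.\ref{assump:rg4}--A.\ref{assump:rg5} to forbid that green son from having proper descendants. The only difference is organizational: you apply the conformity/face-dichotomy argument once, to the ancestor of $\tau'$ at level $L(\tau)+1$, whereas the paper applies it to the father of $\tau'$ (concluding $\tau'$ itself is irregular) and then repeats it one level higher to exclude a level gap of two or more; the mathematical content is identical.
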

\begin{proof}
If $L(\tau)=L(\tau^\prime)$, then $0 \leq 1$, there is nothing to
show.  Without loss of generality, assume that $L(\tau) <
L(\tau^\prime)$. Proof requires a detailed and systematic analysis. To
show the line of reasoning, we first list the facts used in the proof:
\begin{enumerate}
\item $L(\tau^\prime) \leq j$ because by assumption $\tau^\prime \in
{\cal T}_j$.  Then, $L(\tau) < j$.
\item \label{item:never} By assumption $\tau \in {\cal T}_j$, meaning
that $\tau$ was never refined from the level it was born $L(\tau)$ to
level $j$.
\item \label{item:level} Let $\tau^{\prime \prime}$ be the father of
$\tau^\prime$. Then $L(\tau^{\prime \prime}) = L(\tau^\prime)-1 <j$.
\item $L(\tau) < L(\tau^\prime)$ by assumption, implying
$L(\tau) \leq L(\tau^{\prime \prime})$.
\item By (\ref{item:never}), $\tau$ belongs to all the triangulations from
$L(\tau)$ to $j$, in particular
$\tau \in {\cal T}_{L(\tau^{\prime \prime})}$, where
by (\ref{item:level}) $L(\tau^{\prime \prime})<j$.
\end{enumerate}

$f$ is the common face of $\tau$ and $\tau^\prime$ on level $j$. By
(5) both $\tau,~\tau^{\prime \prime} \in {\cal T}_{L(\tau^{\prime \prime})}$. 
Then, A.\ref{assump:rg2} implies that $f$ must still
be the common face of $\tau$ and $\tau^{\prime \prime}$. Hence,
$\tau^\prime$ must have been irregular.

On the other hand, $L(\tau) \leq L(\tau^\prime)-1 = L(\tau^{\prime
\prime})$.  Next, we proceed by eliminating the possibility that
$L(\tau)<L(\tau^{\prime \prime})$. If so, we repeat the above
reasoning, and $\tau^{\prime \prime}$ becomes irregular. $\tau^{\prime
\prime}$ is already the father of the irregular $\tau^\prime$,
contradicting A.\ref{assump:rg4} for level ${L(\tau^{\prime
\prime})}$.  Hence $L(\tau)=L(\tau^{\prime \prime})= L(\tau^\prime)-1$
concludes the proof.
\end{proof}

By A.\ref{assump:rg4} and A.\ref{assump:rg5}, every tetrahedron at any
${\cal T}_j$ is geometrically similar to some tetrahedron in ${\cal
T}_0$ or to a tetrahedron arising from an irregular refinement of some
tetrahedron in ${\cal T}_0$.  Then, there exist absolute constants
$c_1,~c_2$ such that
\begin{equation} \label{ineq:childFather1}
c_1~\size(\bar{\tau})~2^{-L(\tau)} \leq \size(\tau)
\leq c_2~\size(\bar{\tau})~2^{-L(\tau)},
\end{equation}
where $\bar{\tau}$ is the father of $\tau$ in the initial mesh. The
lemma below follows by shape regularity and
(\ref{ineq:face-adjacent}).

\begin{lemma} \label{lemma:edge-adjacent}
Let $\tau, \tau^\prime$ and $\zeta, \zeta^\prime$ be the tetrahedra in
${\cal T}_j$ sharing a common edge (two vertices) and a common vertex,
respectively. Then there exist finite numbers $V$ and $E$ depending on
the shape regularity such that
\begin{eqnarray}
|L(\tau)-L(\tau^\prime)| \leq V,  \label{ineq:vertex-adjacent} \\
|L(\zeta)-L(\zeta^\prime)| \leq E. \label{ineq:edge-adjacent}
\end{eqnarray}
\end{lemma}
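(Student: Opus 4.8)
The plan is to obtain both bounds by \emph{chaining} the face-adjacency estimate (\ref{ineq:face-adjacent}) across the fan of tetrahedra surrounding the shared edge, and respectively across the star of tetrahedra surrounding the shared vertex. Since each link of such a chain contributes a level jump of at most $1$ by (\ref{ineq:face-adjacent}), the level difference between the two prescribed simplices will be controlled by the number of links, and the whole difficulty reduces to bounding the cardinality of these local patches by a constant depending only on the shape-regularity (nondegeneracy) constant from A.\ref{assump:rg3}.

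First I would treat the edge-adjacent pair $\tau,\tau'$. Let $e$ be their common edge and consider all tetrahedra of ${\cal T}_j$ containing $e$. By conformity A.\ref{assump:rg2} these close up into a fan around $e$ in which consecutive members share a common face containing $e$; hence I can order them as $\tau=\tau_0,\tau_1,\dots,\tau_m=\tau'$ with $\tau_i,\tau_{i+1}$ face-adjacent. Applying (\ref{ineq:face-adjacent}) to each consecutive pair and summing with the triangle inequality gives $|L(\tau)-L(\tau')|\leq m$. It remains to bound $m$: by A.\ref{assump:rg3} the dihedral angle of any tetrahedron along any edge is bounded below by some $\alpha_0>0$, and the dihedral angles along $e$ sum to at most $2\pi$, so $m\leq 2\pi/\alpha_0=:V$, which depends only on shape regularity. (Alternatively, for this edge case one may avoid chaining entirely: shape regularity forces $|e|$ to be comparable to both $\size(\tau)$ and $\size(\tau')$, and then (\ref{ineq:childFather1}) together with quasiuniformity of ${\cal T}_0$ yields $2^{-L(\tau)}\approx 2^{-L(\tau')}$ directly.)

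For the vertex-adjacent pair $\zeta,\zeta'$ with common vertex $v$, I would run the same chaining argument on the \emph{star} of $v$, i.e.\ the set of all tetrahedra of ${\cal T}_j$ incident to $v$. The key structural fact is that, for a conforming mesh, the link of $v$ is a connected triangulated surface (a sphere for an interior vertex, a disk for a boundary vertex), so the face-adjacency graph of the star is connected; this lets me connect $\zeta$ to $\zeta'$ by a chain $\zeta=\zeta_0,\dots,\zeta_m=\zeta'$ of face-adjacent tetrahedra all incident to $v$, whence $|L(\zeta)-L(\zeta')|\leq m$ by (\ref{ineq:face-adjacent}). Finally I bound $m$ by the number of tetrahedra in the star: nondegeneracy A.\ref{assump:rg3} gives a lower bound $\omega_0>0$ on the solid angle subtended at $v$ by any incident tetrahedron, and these solid angles sum to at most $4\pi$, so the star contains at most $4\pi/\omega_0=:E$ tetrahedra.

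The main obstacle is this last, purely geometric, counting step together with the connectivity claim: one must show that nondegeneracy of the interior angles forces genuine lower bounds on dihedral angles (for the edge fan) and on solid angles at a vertex (for the vertex star), and that conformity makes the local patch face-connected so that the chain actually reaches $\tau'$ (resp.\ $\zeta'$). Once these local combinatorial-geometric facts are in hand, the level bounds $V$ and $E$ follow immediately from (\ref{ineq:face-adjacent}); I would stress that $V$ and $E$ depend only on the shape-regularity constant and not on $j$, which is precisely what is needed for the subsequent construction of the $L_2$-stable Riesz basis.
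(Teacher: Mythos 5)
Your proposal is correct and takes essentially the same route as the paper: the paper offers no written proof beyond the remark that the lemma ``follows by shape regularity and (\ref{ineq:face-adjacent})'', and your argument---chaining the face-adjacency bound (\ref{ineq:face-adjacent}) across the edge fan and the vertex star, with the patch cardinality bounded by dihedral/solid-angle lower bounds coming from nondegeneracy A.\ref{assump:rg3}---is precisely the fleshed-out version of that assertion. The details you add (face-connectivity of the star, the angle-counting bounds, and the alternative direct size comparison via (\ref{ineq:childFather1}) and quasiuniformity of ${\cal T}_0$) fill in what the paper leaves implicit rather than constituting a different approach.
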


Consequently, simplices in the support of a basis function are
comparable in size as indicated in (\ref{ineq:localRef1}). This is
usually called {\em patchwise quasiuniformity}. Furthermore, it was
shown in~\cite{Ak01phd} that patchwise quasiuniformity
(\ref{ineq:localRef1}) holds for 3D marked tetrahedron bisection by
Joe and Liu~\cite{JoLi95} and for 2D newest vertex bisection by
Sewell~\cite{Se72} and Mitchell~\cite{Mi88}. Due to the restrictive
nature of the proof technique (see (\ref{defn:dinosaurSet}) and
(\ref{id:q-interFixes2})), we focus on refinement procedures which
obey A.\ref{assump:rg4} and A.\ref{assump:rg5}.
However, due to the strong geometrical results available for purely
bisection-based local refinement procedures, it should be
possible to establish the main results of this paper for 
purely bisection-based strategies.
\begin{lemma}
There is a constant depending on the shape regularity of  ${\cal T}_j$ and
the quasiuniformity of ${\cal T}_0$, such that
\begin{equation} \label{ineq:localRef1}
\frac{ \size(\tau) } { \size(\tau^\prime) } \leq c,~~~
\forall \tau, \tau^\prime \in {\cal T}_j,~~~
\tau \cap \tau^\prime \neq \emptyset.
\end{equation}
\end{lemma}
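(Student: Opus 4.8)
The plan is to obtain this patchwise quasiuniformity estimate as a direct consequence of the level-difference bounds just established, combined with the child-to-father size relation (\ref{ineq:childFather1}). First I would invoke the conformity assumption A.\ref{assump:rg2}: since $\tau$ and $\tau^\prime$ are members of the same triangulation ${\cal T}_j$ with $\tau \cap \tau^\prime \neq \emptyset$, either $\tau = \tau^\prime$ (trivial) or their intersection is exactly a common face, a common edge, or a common vertex. This trichotomy is precisely what makes the level difference controllable in every case. Sharing a face gives $|L(\tau)-L(\tau^\prime)| \leq 1$ by (\ref{ineq:face-adjacent}); sharing an edge gives $|L(\tau)-L(\tau^\prime)| \leq V$ and sharing only a vertex gives $|L(\tau)-L(\tau^\prime)| \leq E$ by Lemma~\ref{lemma:edge-adjacent}. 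Since vertex adjacency is the weakest of the three conditions, in all cases
\[
|L(\tau)-L(\tau^\prime)| \leq \max\{1,V,E\} =: M,
\]
where $M$ is finite and depends only on the shape regularity of ${\cal T}_j$.

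Next I would convert this level comparability into size comparability. Let $\bar\tau$ and $\bar\tau^\prime$ denote the initial-mesh ancestors of $\tau$ and $\tau^\prime$. Applying the upper bound of (\ref{ineq:childFather1}) to $\size(\tau)$ and the lower bound to $\size(\tau^\prime)$ yields
\[
\frac{\size(\tau)}{\size(\tau^\prime)}
\leq \frac{c_2}{c_1}\,\frac{\size(\bar\tau)}{\size(\bar\tau^\prime)}\,2^{L(\tau^\prime)-L(\tau)}
\leq \frac{c_2}{c_1}\,\frac{\size(\bar\tau)}{\size(\bar\tau^\prime)}\,2^{M},
\]
where $c_1,c_2$ are the absolute constants from (\ref{ineq:childFather1}). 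It remains to bound the ratio $\size(\bar\tau)/\size(\bar\tau^\prime)$, but this is immediate from the quasiuniformity of the initial partition ${\cal T}_0$: the diameters of any two elements of ${\cal T}_0$ are comparable up to a fixed constant $c_0$. Combining these estimates gives the claim with $c = (c_2/c_1)\,c_0\,2^{M}$, a constant depending only on the shape regularity of ${\cal T}_j$ (through $c_1,c_2,M$) and the quasiuniformity of ${\cal T}_0$ (through $c_0$), exactly as asserted.

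I do not expect a genuine obstacle in this argument, because the substantive geometric work has already been carried out in the preceding results: the face-adjacency bound (\ref{ineq:face-adjacent}) and, most critically, the vertex-adjacency bound (\ref{ineq:vertex-adjacent}) of Lemma~\ref{lemma:edge-adjacent}, which (as the paper emphasizes) rests delicately on the BEK refinement details rather than on shape regularity alone. The present lemma is essentially the bookkeeping step that packages those generation bounds together with (\ref{ineq:childFather1}) into a single size ratio. The only point warranting care is confirming that the vertex-adjacency case genuinely dominates the trichotomy, so that the single constant $M$ covers every admissible intersection, and that the resulting constant's dependence is precisely on shape regularity and on the quasiuniformity of ${\cal T}_0$ and nothing finer.
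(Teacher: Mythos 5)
Your proposal is correct and follows essentially the same route as the paper's own proof: split by the conformity trichotomy into face-, edge-, and vertex-adjacency, bound the level difference by $\max\{1,V,E\}$ using (\ref{ineq:face-adjacent}) and Lemma~\ref{lemma:edge-adjacent}, and then convert levels to sizes via (\ref{ineq:childFather1}) together with the quasiuniformity of ${\cal T}_0$. The only difference is that you spell out the bookkeeping (the ratio $c_2/c_1$ and the ancestor-size ratio) that the paper compresses into generic constants $c$ and $\gamma^{(0)}$.
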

\begin{proof}
$\tau$ and $\tau^\prime$ are either face-adjacent ($d$ vertices),
edge-adjacent ($d-1$ vertices), or vertex-adjacent, and are handled by
(\ref{ineq:face-adjacent}), (\ref{ineq:edge-adjacent}),
(\ref{ineq:vertex-adjacent}), respectively.
\begin{eqnarray*}
\frac{ \size(\tau) } { \size(\tau^\prime) }
& \leq & c~2^{ |L(\tau) - L(\tau^\prime)| }~
\frac{ \size(\bar{\tau}) } { \size(\bar{\tau^\prime}) }~~~
(\mbox{by (\ref{ineq:childFather1}))} \\
& \leq & c~2^{\max\{1,E,V\}}~\gamma^{(0)}~~~
(\mbox{by (\ref{ineq:face-adjacent}), (\ref{ineq:edge-adjacent}),
(\ref{ineq:vertex-adjacent}) and quasiuniformity of ${\cal T}_0$)}
\end{eqnarray*}
\end{proof}

\subsection{$L_2$-stable Riesz basis} \label{sec:stableRieszBasis}

Since patchwise quasiuniformity is established by (\ref{ineq:localRef1}),
we can now take the first step in establishing the norm equivalence in 
section~\ref{sec:optBPX}. In other words, our motivation is to form a stable 
basis in the following sense~\cite{Os94book}.
\begin{equation} \label{id:stableBasisOswald}
\| \sum_{x_i \in {\cal N}_j} u_i \phi_i^{(j)} \|_{L_2(\Omega)} \eqsim
\|\{ {\rm volume}^{1/2}
(\supp~\phi_i^{(j)})~u_i\}_{x_i \in {\cal N}_j} \|_{l_2}.
\end{equation}
The basis stability (\ref{id:stableBasisOswald}) will then guarantee
that the Bernstein inequality (\ref{ineq:Bernstein}) holds. For a
stable basis, functions with small supports have to be augmented by an
appropriate scaling so that $\|\phi_i^{(j)}\|_{L_2(\Omega)}$ remains
roughly the same for all basis functions. This is reflected in ${\rm
volume}(\supp~\phi_i^{(j)})$ by defining:
\begin{equation} \label{defn:Ljv}
L_{j,i} = \min \{ L(\tau): \tau \in {\cal T}_j,~x_i \in \tau \}.
\end{equation}
Then
$$
{\rm volume} (\supp~\phi_i^{(j)}) \eqsim 2^{-d L_{j,i}}.
$$
We prefer to use an equivalent notion of basis stability;
a basis is called {\em $L_2$-stable Riesz basis} if:
\begin{equation} \label{id:sBasisOswaldEquiv}
\| \sum_{x_i \in {\cal N}_j} \hat{u}_i 
\hat{\phi}_i^{(j)}\|_{L_2(\Omega)} \eqsim
\|\{ \hat{u}_i\}_{x_i \in {\cal N}_j}\|_{l_2},
\end{equation}
where $\hat{\phi}_i^{(j)}$ denotes the scaled basis, and the
relationship between (\ref{id:stableBasisOswald}) and
(\ref{id:sBasisOswaldEquiv}) is given as follows:
\begin{equation} \label{scaledBasis}
\hat{\phi}_i^{(j)} = 2^{d/2 L_{j,i}}~\phi_i^{(j)},~~~
\hat{u}_i = 2^{-d/2 L_{j,i}}~u_i,~~~x_i \in {\cal N}_j.
\end{equation}
Then (\ref{id:sBasisOswaldEquiv}) forms the sufficient condition to establish
the  Bernstein inequality (\ref{ineq:Bernstein}).
This crucial property helps us to prove Theorem~\ref{thm:basisStab}.

\begin{remark}
The analysis is done purely with basis functions, completely
independent of the underlying mesh geometry. Furthermore,
our construction works for any $d$-dimensional setting with the
scaling (\ref{scaledBasis}). However, it is not clear how to define
face-adjacency relations for $d>3$.  If such relations can be defined
through some topological or geometrical abstraction, then our
framework naturally extends to $d$-dimensional local refinement
strategies, and hence the optimality of the BPX and WHB preconditioners
can be guaranteed in $\Re^d,~d \geq 1$. One such generalization was given by
Brandts-Korotov-Krizek in~\cite{BrKoKr04} and in the references therein.
\end{remark}

\section{Local smoothing computational complexity} \label{sec:localComplexity}

In~\cite{BoErKo}, the smoother is chosen to act on the local space
$$\tilde{{\cal S}}_j = {\rm span} \left[
\bigcup \{ \phi_i^{(j)} \}_{i=N_{j-1}+1}^{N_j}
\bigcup \{ \phi_i^{(j)} \neq \phi_i^{(j-1)} \}_{i=1}^{N_{j-1}} 
\right].
$$
Other choices for $\tilde{{\cal N}}_j$  are also possible; e.g., DOF 
which intersect the refinement region $\Omega_j$~\cite{AkBoHo03,BrPa93}.
The only restriction is that $\tilde{{\cal N}}_j \subset \Omega_j$.
For this particular choice,
$
\tilde{{\cal N}}_j = \{i=N_{j-1}+1, \ldots, N_j \}
\bigcup \{ i: \phi_i^{(j)} \neq \phi_i^{(j-1)},~~i=1, \ldots, N_{j-1}\},
$
the following result from~\cite{BoErKo} establishes a bound for the number
of nodes used for smoothing (those created in $\Omega_j$ by the
BEK procedure) so that the BPX preconditioner has provably optimal (linear)
computational complexity per iteration.
\begin{lemma}
The total number of nodes used for smoothing satisfies the bound:
\begin{equation} \label{ineq:smoothBound}
\sum_{j=0}^J \tilde{N}_j \leq \frac{5}{3}N_J - \frac{2}{3}N_0.
\end{equation}
\end{lemma}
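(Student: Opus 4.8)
The plan is to reduce the global bound to a single per-level inequality and then to prove that inequality by a degree-counting (handshaking) argument on the graph of refined edges. First I would introduce the number of genuinely new degrees of freedom at level $j$, namely $M_j := N_j - N_{j-1}$ for $j \geq 1$, together with the number $C_j$ of coarse ``father'' nodes (indices $i \leq N_{j-1}$) whose nodal basis function changes, i.e. $\phi_i^{(j)} \neq \phi_i^{(j-1)}$. By the very definition of $\tilde{{\cal S}}_j$ one has $\tilde{N}_j = M_j + C_j$ for $j \geq 1$, while $\tilde{N}_0 = N_0$ (no coarser level exists, so $C_0 = 0$). Summing and telescoping $\sum_{j=1}^J M_j = N_J - N_0$ gives
\begin{equation*}
\sum_{j=0}^J \tilde{N}_j = N_J + \sum_{j=1}^J C_j .
\end{equation*}
Hence the lemma is equivalent to $\sum_{j=1}^J C_j \leq \frac{2}{3}(N_J - N_0)$, which I would obtain from the sharper per-level claim $C_j \leq \frac{2}{3}M_j$ for every $j \geq 1$.

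To prove $C_j \leq \frac{2}{3}M_j$ I would pass to the graph $G_j$ whose vertices are the level-$(j-1)$ nodes and whose edges are the \emph{refined edges}, namely those edges of ${\cal T}_{j-1}$ that acquire a midpoint node in ${\cal T}_j$. Two observations set up the count. Each new node of level $j$ is the midpoint of a unique refined edge, so $|E(G_j)| = M_j$. Conversely, a father's basis function changes precisely when at least one edge incident to $x_i$ is refined: if some incident edge $x_i x_k$ is bisected at $m$, then $\phi_i^{(j)}(m)=0$ but $\phi_i^{(j-1)}(m)=\tfrac12$, so the functions differ; while if no incident edge is bisected, then even though neighboring tetrahedra may be subdivided, $\phi_i^{(j)}$ and $\phi_i^{(j-1)}$ agree at every vertex of the finer mesh (their common value at an untouched midpoint being the average of two vanishing nodal values) and hence coincide as continuous piecewise linear functions. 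Therefore $C_j$ equals the number of non-isolated vertices of $G_j$.

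The crux is a degree bound: every non-isolated vertex of $G_j$ has degree at least $3$. Here I would invoke the structure of the BEK procedure. Because green (irregular) closure introduces no new vertices (Assumptions~\ref{assump:rg4}--\ref{assump:rg6}), every refined edge is an edge of some \emph{regularly} (red) refined tetrahedron, and red refinement bisects all six edges of that tetrahedron. Thus if an old node $x_i$ is incident to a refined edge, it is a vertex of a red-refined tetrahedron $T$, and all three edges of $T$ meeting at $x_i$ are then refined; hence $\deg_{G_j}(x_i) \geq 3$. Handshaking now yields $2M_j = \sum_{v} \deg_{G_j}(v) \geq 3\,C_j$, i.e. $C_j \leq \frac{2}{3}M_j$.

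Combining the two steps,
\begin{equation*}
\sum_{j=0}^J \tilde{N}_j = N_J + \sum_{j=1}^J C_j \leq N_J + \frac{2}{3}\sum_{j=1}^J M_j = N_J + \frac{2}{3}(N_J - N_0) = \frac{5}{3}N_J - \frac{2}{3}N_0,
\end{equation*}
which is the asserted bound. I expect the main obstacle to be the degree-$3$ claim: it is exactly the point at which the red--green combinatorics (all six edges refined under red refinement, no extra vertices created by the green closure) enters, and one must also restrict the bookkeeping to interior nodes so that boundary midpoints --- which are not counted among the degrees of freedom --- do not spoil either $M_j$ or the incidence count. Everything else is routine telescoping and handshaking.
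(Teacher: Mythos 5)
Your proof is correct, but note that the paper itself does not prove this lemma: its ``proof'' is a citation to \cite[Lemma 1]{BoErKo}. What you have written is therefore a self-contained reconstruction of the Bornemann--Erdmann--Kornhuber counting argument rather than a variant of anything proved in the present article. Your skeleton --- $\tilde{N}_j = M_j + C_j$, telescoping $\sum_{j=1}^J M_j = N_J - N_0$, and the per-level estimate $C_j \leq \frac{2}{3}M_j$ via handshaking on the graph of refined edges --- is exactly the mechanism that produces the constants $\frac{5}{3}$ and $\frac{2}{3}$, and the two places where such an argument could fail are both handled soundly. First, the degree-$3$ claim: in the BEK procedure new vertices are created only by red refinement (green subdivision merely connects existing midpoints), so every refined edge is an edge of some red-refined tetrahedron, and red refinement bisects all six of its edges; hence an old vertex meeting one refined edge meets at least three. (Strictly speaking, the no-new-vertices property of the green closure is inherent in the definition of irregular refinement rather than a consequence of A.\ref{assump:rg4}--A.\ref{assump:rg6}, but that is an attribution quibble, not a gap.) Second, the boundary bookkeeping you flag at the end does close: by conformity A.\ref{assump:rg2}, an edge incident to an interior vertex cannot lie inside a boundary face of $\partial\Omega$, so its midpoint is a genuine interior degree of freedom; thus every refined edge at an interior vertex is an edge of $G_j$, the degree bound survives the restriction to interior nodes, and discarding boundary endpoints in the handshaking sum only strengthens $2M_j \geq 3C_j$. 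Finally, your characterization of $C_j$ is exact, since by nestedness A.\ref{assump:rg1} the coarse function $\phi_i^{(j-1)}$ is itself piecewise linear on ${\cal T}_j$, so $\phi_i^{(j)} = \phi_i^{(j-1)}$ precisely when $\phi_i^{(j-1)}$ vanishes at all new nodes, i.e.\ when no refined edge meets $x_i$.
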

\begin{proof}
See~\cite[Lemma 1]{BoErKo}.
\end{proof}

A similar result for 2D red-green refinement was
given by Oswald~\cite[page 95]{Os94book}.
In the general case of local smoothing operators which involve
smoothing over newly created basis functions plus some additional set of
local neighboring basis functions, one can extend the arguments
from~\cite{BoErKo} and~\cite{Os94book} using shape regularity.

\section{Establishing optimality of the BPX norm equivalence} \label{sec:optBPX}
In this section, we extend the Dahmen-Kunoth framework to three spatial
dimensions; the extension closely follows the original work. However, the 
general case for $d \geq 1$ spatial dimensions is not in the 
literature, and therefore we present it below.

For linear $g$, the element mass matrix gives rise to the following useful 
formula.
\begin{equation} \label{form:massIntegral}
\|g\|_{L_2(\tau)}^2 = \frac{{\rm volume}(\tau)}{(d+1)(d+2)}
~( \sum_{i=1}^{d+1} g(x_i)^2+ [ \sum_{i=1}^{d+1} g(x_i) ]^2 ),
\end{equation}
where,~$i=1, \ldots, d+1$ and $x_i$ is a vertex of $\tau$, $d=2,3$.
In view of (\ref{form:massIntegral}), we have that
$$
\|\hat{\phi}_i^{(j)}\|_{L_2(\Omega)}^2 = 2^{d L_{j,i}}~
\frac{{\rm volume(\supp }~\hat{\phi}_i^{(j)})}{(d+1)(d+2)}.
$$

Since the $\min$ in (\ref{defn:Ljv}) is attained, there exists
at least one $\tau \in \supp~\hat{\phi}_i^{(j)}$ such that $L(\tau) = L_{j,i}$.
By (\ref{ineq:childFather1}) we have
\begin{equation} \label{id:1}
2^{L_{j,i}} \eqsim \frac{ \size(\tau) }{ \size(\bar{\tau})}.
\end{equation}
Also,
\begin{equation} \label{id:2}
{\rm volume(supp}~\hat{\phi}_i^{(j)}) \eqsim \sum_{i=1}^E \size^d(\tau_i),~~~
\tau_i \in \supp~\hat{\phi}_i^{(j)}.
\end{equation}
By (\ref{ineq:localRef1}), we have
\begin{equation} \label{id:3}
\size(\tau_i) \eqsim \size(\tau).
\end{equation}
Combining (\ref{id:2}) and (\ref{id:3}), we conclude
\begin{equation} \label{id:4}
{\rm volume(supp}~\hat{\phi}_i^{(j)}) \eqsim E~\size^d(\tau).
\end{equation}
Finally then, (\ref{id:1}) and (\ref{id:4}) yield
$$
2^{d L_{j,i}} {\rm volume(supp}~\hat{\phi}_i^{(j)}) \eqsim
E~ \frac{1}{\size^d(\bar{\tau})}.
$$
$E$ is a uniformly bounded constant by shape regularity. One can view the size 
of any tetrahedron in ${\cal T}_0$, in particular size of $\bar{\tau}$, as a 
constant. The reason is the following: A.\ref{assump:rg4} and A.\ref{assump:rg5} force
every tetrahedron at any ${\cal T}_j$ to be geometrically similar to some 
tetrahedron in ${\cal T}_0$ or to a tetrahedron arising from an irregular refinement
of some tetrahedron in ${\cal T}_0$, hence, to some tetrahedron of a fixed finite 
collection. Combining the two arguments above, we have established that
\begin{equation} \label{basisFunction1}
\|\hat{\phi}_i^{(j)}\|_{L_2(\Omega)} \eqsim 1,~~~x_i \in {\cal N}_j.
\end{equation}

Let $g=\sum_{x_i \in {\cal N}_j} \hat{u}_i \hat{\phi}_i^{(j)} \in {\cal S}_j$.
For any $\tau \in {\cal T}_j$ we have that
\begin{equation} \label{ineq:Dahmen5.8}
\|g\|_{L_2(\tau)}^2 \leq c~\sum_{x_i \in {\cal N}_{j,\tau}} |\hat{u}_i|^2
\|\hat{\phi}_i^{(j)}\|_{L_2(\Omega)}^2,
\end{equation}
where  ${\cal N}_{j,\tau} = \{ x_i \in {\cal N}_j: x_i \in \tau \}$, which is
uniformly bounded in $\tau \in {\cal T}_j$ and $j \in \Na_0$. By the
scaling (\ref{scaledBasis}), we get equality in the estimate below.
The inequality is a standard inverse inequality where one bounds
$g(x_i)$ using formula (\ref{form:massIntegral}) and by handling the volume in
the formula by  (\ref{ineq:childFather1}):
\begin{equation} \label{ineq:Dahmen5.9}
|\hat{u}_i|^2 = 2^{-d L_{j,i}} |g(x_i)|^2
\leq c~ 2^{-d L_{j,i}} 2^{d L_{j,i}} \|g\|_{L_2(\tau)}^2.
\end{equation}
Now, we are ready to establish that our basis is an $L_2$-stable Riesz basis
as in (\ref{id:sBasisOswaldEquiv}). This is achieved by simply summing up over 
$\tau \in {\cal T}_j$ in (\ref{ineq:Dahmen5.8}) and (\ref{ineq:Dahmen5.9})
and using (\ref{basisFunction1}).
$L_2$ stability in the Riesz sense allows us to establish the Bernstein 
inequality (\ref{ineq:Bernstein}).
\begin{lemma} \label{lemma:BernsteinHolds}
For the scaled basis (\ref{scaledBasis}), the Bernstein inequality (\ref{ineq:Bernstein})
holds for $\beta = 3/2$
\end{lemma}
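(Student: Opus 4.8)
The plan is to derive the Bernstein inequality from the $L_2$-stable Riesz basis property (\ref{id:sBasisOswaldEquiv}) just established, combined with the patchwise quasiuniformity (\ref{ineq:localRef1}) from Section~\ref{sec:3DRed-Green} and a standard inverse estimate on shape-regular simplices. First I would pin down the exponent: for continuous piecewise linear elements one has $k=1$, $r=0$, and we work with $p=2$, so the formula $\beta = \min\{1+r+\tfrac{1}{p}, k+1\}$ gives $\beta = \min\{\tfrac{3}{2}, 2\} = \tfrac{3}{2}$, and the modulus appearing in (\ref{ineq:Bernstein}) is $\omega_2(u,t)_2$. Because $\mathcal{S}_j \subset \mathcal{S}_J$ it suffices to argue for $u \in \mathcal{S}_J$, and since $\omega_2(u,t)_2 \leq c\|u\|_{L_2}$ holds unconditionally, the entire content lies in the regime $t2^J \leq 1$, where I must show $\omega_2(u,t)_2 \leq c(t2^J)^{3/2}\|u\|_{L_2}$.

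In that regime I would estimate $\|\Delta_\eta^2 u\|_{L_2(\Omega)}$ for a fixed step $|\eta| \leq t$ and then pass to the supremum. The decisive observation is that $u$ is affine on every $\tau \in \mathcal{T}_J$, so the second difference $\Delta_\eta^2 u(x)$ vanishes whenever the whole segment $[x, x+2\eta]$ lies in a single simplex; hence $\Delta_\eta^2 u$ is supported in the tubular neighborhood of width $O(|\eta|)$ of the skeleton $\bigcup_\tau \partial\tau$. On the tube attached to an interior face $f$ shared by $\tau,\tau'$, the second difference only sees the kink of $u$ and obeys the pointwise bound $|\Delta_\eta^2 u(x)| \leq c\,|\eta|\,|[\nabla u]_f|$, where $[\nabla u]_f$ is the (piecewise constant) jump of the gradient across $f$. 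Since that tube has volume at most $c\,|\eta|\,|f|$, summing the squared integrand over all faces produces $\|\Delta_\eta^2 u\|_{L_2(\Omega)}^2 \leq c\,|\eta|^3 \sum_f |[\nabla u]_f|^2\,|f|$.

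The remaining task is to bound the face sum by $c\,2^{3J}\|u\|_{L_2(\Omega)}^2$. Here I would apply the inverse inequality on each shape-regular simplex, which for an affine $u$ reads $|\nabla u|_\tau^2 \leq c\,\size(\tau)^{-d-2}\|u\|_{L_2(\tau)}^2$, and invoke (\ref{ineq:localRef1}) to guarantee that the two simplices meeting at $f$ have comparable diameters $\size(\tau)\eqsim 2^{-L(\tau)}$ with $|f|\eqsim\size(\tau)^{d-1}$. The algebra $\size(\tau)^{d-1}\cdot\size(\tau)^{-d-2}=\size(\tau)^{-3}$ collapses the dimension dependence, so each face contributes at most $c\,2^{3L(\tau)}\|u\|_{L_2(\tau\cup\tau')}^2$; using $L(\tau)\leq J$ for every $\tau\in\mathcal{T}_J$ together with finite overlap of the supports then dominates the sum by $c\,2^{3J}\|u\|_{L_2(\Omega)}^2$, the uniformity of all these local constants across levels being guaranteed by shape regularity together with (\ref{ineq:localRef1}), which is exactly the content packaged in the $L_2$-stable Riesz basis condition (\ref{id:sBasisOswaldEquiv}). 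Feeding this back yields $\|\Delta_\eta^2 u\|_{L_2}^2 \leq c\,(|\eta|\,2^J)^3\|u\|_{L_2}^2$, and the supremum over $|\eta|\leq t$ gives exactly (\ref{ineq:Bernstein}) with $\beta=\tfrac{3}{2}$.

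The step I expect to be the genuine obstacle is the face-sum bookkeeping on the locally refined skeleton: both the measure of the skeleton's neighborhood and the per-face jump constants must be controlled uniformly even though element diameters vary wildly across the mesh. This is precisely where the generation bounds and patchwise quasiuniformity of Section~\ref{sec:3DRed-Green} are indispensable---they prevent the local inverse constants from accumulating with the number of refinement levels, which is the one place where the local (as opposed to quasiuniform) nature of the refinement could otherwise destroy the estimate.
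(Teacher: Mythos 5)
Your argument is correct in substance, but it takes a genuinely different route from the paper. The paper's proof is essentially a citation: having just established, via (\ref{basisFunction1}), (\ref{ineq:Dahmen5.8}) and (\ref{ineq:Dahmen5.9}), that the scaled basis (\ref{scaledBasis}) is an $L_2$-stable Riesz basis in the sense of (\ref{id:sBasisOswaldEquiv}), it invokes the general result \cite[Theorem 4]{Os94book}, which converts level-wise Riesz stability into the Bernstein inequality (\ref{ineq:Bernstein}). You instead prove the Bernstein inequality directly: $\Delta_\eta^2 u$ is supported in an $O(|\eta|)$-tube around the mesh skeleton, is bounded pointwise by $|\eta|$ times the gradient jump across the face crossed, and the resulting face sum is controlled by an inverse estimate for affine functions together with the geometry of \S\ref{sec:3DRed-Green} --- patchwise quasiuniformity (\ref{ineq:localRef1}) and the father--son size relation (\ref{ineq:childFather1}), which gives $\size(\tau)\gtrsim 2^{-J}$ for all $\tau\in{\cal T}_J$. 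This buys a self-contained, elementary argument in which the exponent $\beta=3/2$ is transparent (one power of $|\eta|$ from the tube width, two from the second difference, and $\size^{d-1}\cdot\size^{-d-2}=\size^{-3}$ cancelling the dimension), whereas the paper's route highlights Riesz stability as the organizing principle and outsources the approximation theory. Note, though, that despite your opening sentence your proof never actually uses the Riesz basis property (\ref{id:sBasisOswaldEquiv}); it uses the same underlying geometric facts from which that property is derived, so it runs parallel to, not through, the paper's stability result.

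Two steps need patching. First, your pointwise bound tacitly assumes the segment $[x,x+2\eta]$ crosses a single face; on a locally refined mesh it may cross several. This is repairable with tools you already invoke: since every $\tau\in{\cal T}_J$ satisfies $\size(\tau)\geq c\,2^{-J}\geq c\,|\eta|$ by (\ref{ineq:childFather1}), shape regularity bounds the number of elements met by the segment by an absolute constant, and Cauchy--Schwarz over this bounded chain of crossed faces restores your per-face estimate at the cost of a harmless constant; this should be said explicitly, since it is exactly the point where local refinement could otherwise cause trouble. Second, your reduction to $u\in{\cal S}_J$ proves (\ref{ineq:Bernstein}) as literally written (with $2^J$); the sharper level-wise form, with $2^j$ for $u\in{\cal S}_j$, which is what the Dahmen--Kunoth machinery behind Theorem~\ref{thm:main} (and Oswald's theorem) actually delivers and consumes, follows by running your identical argument on ${\cal T}_j$, using that every element of ${\cal T}_j$ has diameter $\gtrsim 2^{-j}$. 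With these two remarks added, your proof is a valid and arguably more informative alternative to the paper's.
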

\begin{proof}
(\ref{basisFunction1}) with (\ref{ineq:Dahmen5.8}) and (\ref{ineq:Dahmen5.9}) assert
that the scaled basis (\ref{scaledBasis}) is stable in the sense of
(\ref{id:sBasisOswaldEquiv}). 
Hence, (\ref{ineq:Bernstein}) holds by~\cite[Theorem 4]{Os94book}.
Note that the proof actually works independently of the spatial dimension.
\end{proof}

\section{Lower bound in the norm equivalence} \label{sec:reason}

The Jackson inequality for Besov spaces is defined as follows:
\begin{equation} \label{Jackson2}
\inf_{g \in {\cal S}_J} \| f - g \|_{L_p}
 \leq c ~\omega_\alpha (f, 2^{-J})_p,~~~f \in L_p(\Omega),
\end{equation}
where $c$ is a constant independent of $f$ and $J$, and $\alpha$ is an
integer.  In the uniform refinement setting, (\ref{Jackson2}) is used
to obtain the lower bound in~(\ref{mainNormEquiv}).  However, in the
local refinement setting, (\ref{Jackson2}) holds only for functions
whose singularities are somehow well-captured by the mesh
geometry. For instance, if a mesh is designed to pick up the
singularity at $x=0$ of $y = 1/x$, then on the same mesh we will not
be able to recover a singularity at $x=1$ of $y= 1/(x-1)$. Hence the
Jackson inequality (\ref{Jackson2}) cannot hold in a general setting,
i.e. for $f \in W_p^k$. In order to get the lower bound
in~(\ref{mainNormEquiv}), we focus on estimating $\theta_J$ directly,
as in~\cite{DaKu92} for the 2D setting.

To begin we borrow the quasi-interpolant construction
from~\cite{DaKu92}, extending it to the three-dimensional setting.
Let $\tau \in {\cal T}_j$ be a tetrahedron with vertices
$x_1,x_2,x_3,x_4$.  Clearly the restrictions of $\hat{\phi}_i^{(j)}$
to $\tau$ are linearly independent over $\tau$ where $x_i \in
\{x_1,x_2,x_3,x_4\}$. Then, there exists a unique set of linear
polynomials $\psi_1^\tau, \psi_2^\tau, \psi_3^\tau, \psi_4^\tau$ such
that
\begin{equation} \label{id:biorthoPoly}
\int_\tau \hat{\phi}_k^{(j)} (x,y,z) \psi_l^\tau (x,y,z)
dx dy dz = \delta_{kl},~~~x_k, x_l \in \{x_1,x_2,x_3,x_4\}.
\end{equation}
For $x_i \in {\cal N}_j$ and $\tau \in {\cal T}_j$, define a function
for $x_i \in \tau$

\begin{equation} \label{defn:xi}
M_i^{(j)} (x,y,z) =
\left\{ \begin{array}{ll}
\frac{1}{E_i} \psi_i^\tau (x,y,z), & (x,y,z) \in \tau \\
0,    & (x,y,z) \not\in \supp~\hat{\phi}_i^{(j)}
\end{array} \right.
,\end{equation}
where $E_i$ is the number of tetrahedra in ${\cal T}_j$ in
$\supp~\hat{\phi}_i^{(j)}$. By (\ref{id:biorthoPoly}) and
(\ref{defn:xi}), we obtain
\begin{equation} \label{id:biorthoBasis}
(M_k^{(j)}, \hat{\phi}_l^{(j)}) = \int_{\Omega} M_k^{(j)}(x,y,z) 
\hat{\phi}_l(x,y,z)~dxdydz = \delta_{kl},~~~x_k, x_l \in {\cal N}_j.
\end{equation}

We can now define a quasi-interpolant, in fact a {\em projection} onto
${\cal S}_j$, such that
\begin{equation} \label{defn:quasi-interpol}
(\tilde{Q}_j f)(x,y,z) = \sum_{x_i \in {\cal N}_j}
(f, M_i^{(j)}) \hat{\phi}_i^{(j)} (x,y,z).
\end{equation}
As remarked earlier, due to~(\ref{defn:xi}) the slice operator term
$\tilde{Q}_j - \tilde{Q}_{j-1}$ will vanish outside the refined set
$\Omega_j$ defined in~(\ref{eqn:omega}).
One can easily observe by (\ref{basisFunction1}) and (\ref{id:biorthoBasis})
that
\begin{equation} \label{id:biortho1}
\|M_i^{(j)}\|_{L_2(\Omega)} \eqsim 1,~~~x_i \in {\cal N}_j, ~j \in \Na_0.
\end{equation}

Letting  $\Omega_{j,\tau} = \bigcup \{ \tau^\prime \in {\cal T}_j :~
\tau \cap \tau^\prime \neq \emptyset \}$, we can conclude from
(\ref{basisFunction1}) and (\ref{id:biortho1}) that
\begin{equation} \label{ineq:quasiInterpol}
\|\tilde{Q}_j f\|_{L_2(\tau)} =
\| \sum_{x_k \in {\cal N}_{j,\tau}} (f,M_{l}^{(j)})\hat{\phi}_k^{(j)} \|_{L_2(\tau)}
\leq c \|f\|_{L_2(\Omega_{j,\tau})}.
\end{equation}

We define now a subset of the triangulation where the refinement activity
stops, meaning that all tetrahedra in ${\cal T}_j^\ast,~j \leq m$ also belong to
${\cal T}_m$:
\begin{equation} \label{defn:dinosaurSet}
{\cal T}_j^\ast = \{ \tau \in {\cal T}_j:~L(\tau)<j,
~\Omega_{j,\tau} \cap \tau^\prime = \emptyset,
~\forall \tau^\prime \in {\cal T}_j~{\rm with}~L(\tau^\prime) = j \}.
\end{equation}
Due to the local support of the dual basis functions $M_i^{(j)}$ and 
the fact that $\tilde{Q}_j$ is a projection, one gets for $g \in {\cal S}_J$:
\begin{equation} \label{id:q-interFixes2}
\| g- \tilde{Q}_j g  \|_{L_2(\tau)} = 0,~~~\tau \in {\cal T}_j^\ast.
\end{equation}

Since $\tilde{Q}_j$ is a projection onto linear finite element space,
it fixes polynomials of degree at most $1$ (i.e. $\Pi_1(\Re^3))$. Using
this fact  and (\ref{ineq:quasiInterpol}), we arrive:
\begin{eqnarray}
\| g- \tilde{Q}_j g  \|_{L_2(\tau)} & \leq & \| g - P \|_{L_2(\tau)} +
\| \tilde{Q}_j (P-g) \|_{L_2(\tau)} \nonumber \\
& \leq & c ~ \| g - P \|_{L_2(\Omega_{j,\tau})},\quad
\tau \in {\cal T}_j \setminus {\cal T}_j^\ast.  \label{ineq:q-interNeedsWhitney}
\end{eqnarray}

We would like to bound the right hand side of (\ref{ineq:q-interNeedsWhitney})
in terms of a modulus of smoothness in order to reach a Jackson-type inequality.
Following~\cite{DaKu92}, we utilize a modified modulus of smoothness
for $f \in L_p(\Omega)$
$$
\tilde{\omega}_k (f,t,\Omega)_p^p = t^{-s} \int_{[-t,t]^s}
\|\Delta_h^k f \|_{L_p(\Omega_{k,h})}^p~dh.
$$
They can be shown to be equivalent: 
$$
\tilde{\omega}_{k+1} (f,t,\Omega)_p \eqsim \omega_{k+1} (f,t,\Omega)_p.
$$
The equivalence in the one-dimensional setting can be found 
in~\cite[Lemma 5.1]{DeLo93}.

For $\tau$ a simplex in $\Re^d$ and $t = \size(\tau)$,
a Whitney estimate shows that~\cite{DePo88,Os90,StOs78}
\begin{equation} \label{ineq:Whitney}
\inf_{P \in \Pi_k (\Re^d)} \|f-P\|_{L_p(\tau)}
\leq c \tilde{\omega}_{k+1}(f,t,\tau)_p,
\end{equation}
where $c$ depends only on the smallest angle of $\tau$ but not on $f$
and $t$. The reason why $\tilde{Q}_j$ works well for tetrahedralization
in 3D is the fact that the Whitney estimate (\ref{ineq:Whitney}) remains valid
for any spatial dimension. ${\cal T}_j \setminus {\cal T}_j^\ast$ is the
part of the tetrahedralization ${\cal T}_j$ where refinement is active at every
level. Then, in view of (\ref{ineq:localRef1})
$$
\size(\Omega_{j,\tau}) \eqsim 2^{-j},
~~~\tau \in {\cal T}_j \setminus {\cal T}_j^\ast .
$$
Taking the $\inf$ over $P \in \Pi_{1}(\Re^3)$ in
(\ref{ineq:q-interNeedsWhitney}) and using the Whitney estimate
(\ref{ineq:Whitney}) we conclude
$$
\| g- \tilde{Q}_j g  \|_{L_2(\tau)} \leq c \tilde{\omega}_{2} (g,2^{-j},
\Omega_{j,\tau})_2.
$$
Recalling (\ref{id:q-interFixes2}) and summing over
$\tau \in {\cal T}_j \setminus {\cal T}_j^\ast$ gives rise to
$$
\| g- \tilde{Q}_j g \|_{L_2(\Omega)}
   \leq c \tilde{\omega}_2 (g,2^{-j},\Omega)_2
   \leq \tilde{c}~\omega_2 (g,2^{-j},\Omega)_2,
$$
where we have switched from the modified modulus of smoothness
to the standard one.  Since $Q_j$ is an orthogonal projection, we
have the following:
$$ \| g - Q_j g \| \leq \| g - \tilde{Q}_j g \|.$$
Using the above inequality with (\ref{def:lowerBound}) one then has
\begin{equation} \label{mainResult}
v_J = O(1),~~~J \rightarrow \infty.
\end{equation}

\section{The WHB preconditioner} \label{sec:WHBprecond}

In local refinement, HB methods enjoy an optimal complexity of 
$O(N_j- N_{j-1})$ per iteration per level (resulting in $O(N_J)$
overall complexity per iteration) by only using degrees of freedom
(DOF) corresponding to ${\cal S}_j^f$.  However, HB methods suffer
from suboptimal iteration counts or equivalently suboptimal 
condition number. The BPX decomposition 
${\cal S}_j = {\cal S}_{j-1} \oplus (Q_{j} -Q_{j-1}) {\cal S}_j$
gives rise to basis functions which are not locally supported, but
they decay rapidly outside a local support region. This allows for
locally supported approximations, and in addition the WHB
methods~\cite{VaWa2,VaWa1,VaWa3} can be viewed as an approximation of
the wavelet basis stemming from the BPX decomposition~\cite{Ja92}.  A
similar wavelet-like multilevel decomposition approach was taken
in~\cite{St97}, where the orthogonal decomposition is formed by a
discrete $L_2$-equivalent inner product. This approach utilizes the
same BPX two-level decomposition~\cite{St95,St97}. The WHB 
preconditioner is defined as follows:
\begin{equation} \label{id:genericHB}
H u := \sum_{j=0}^J
2^{j(d-2)} \sum_{i \in {\cal N}_j^f} (u,\psi_i^{(j)}) \psi_i^{(j)},
\end{equation}
where $\psi_i^{(j)} = (\tilde{Q}_j - \tilde{Q}_{j-1}) \phi_i^{(j)}$.
The WHB preconditioner uses the modified basis (where as the BPX preconditioner
uses the standard nodal basis) where the projection operator used is defined 
as in (\ref{Wk}).  In the WHB setting, these operators are chosen to satisfy
the following three properties~\cite{AkHo05-techReportII}:
\begin{eqnarray}
\tilde{Q}_j~|_{{\cal S}_j} & = & I, \label{id:prop1} \\
\tilde{Q}_j \tilde{Q}_k & = & \tilde{Q}_{\min\{j,k\}}, \label{id:prop2} \\
\|(\tilde{Q}_j - \tilde{Q}_{j-1}) u^{(j)} \|_{L_2} & \eqsim & \|u^{(j)} \|_{L_2},~~~u^{(j)} \in
(I_j - I_{j-1}) {\cal S}_j. \label{id:prop3}
\end{eqnarray}

As indicated in (\ref{subset:localSpace}), the WHB smoother acts on
only the fine DOF, i.e. ${\cal N}_j^f$, and hence is an approximation
to fine-fine discretization operator; 
$A_{ff}^{(j)}: \calg{S}_j^f \rightarrow \calg{S}_j^f$, 
where $\calg{S}_j^f := (\tilde{Q}_j- \tilde{Q}_{j-1}) {\cal S}_j$
and $f$ stands for fine. On the other hand, the BPX smoother acts on 
a slightly bigger set than fine DOF,
${\cal N}_j^f \subseteq \tilde{\calg{N}}_j$ typically, union of fine
DOF and their corresponding coarse fathers.

The WHB preconditioner introduced in~\cite{VaWa2,VaWa1} is, in some
sense, the best of both worlds.  While the condition number of the HB
preconditioner is stabilized by inserting $Q_j$ in the definition of
$\tilde{Q}_j$, somehow employing the operators $I_j - I_{j-1}$ at the same
time guarantees optimal computational and storage cost per iteration.
The operators which will be seen to meet both goals at the same time
are:
\begin{equation} \label{Wk}
\tilde{Q}_k = \prod_{j=k}^{J-1} I_j + Q_j^a (I_{j+1} - I_j),
\end{equation}
with $\tilde{Q}_J = I$. The exact $L_2$-projection $Q_j$ is replaced by a computationally
feasible approximation $Q_j^a: L_2 \rightarrow {\cal S}_j$. To control the approximation
quality of $Q_j^a$, a small fixed tolerance $\gamma$ is introduced:
\begin{equation} \label{ineq:gamma}
\| (Q_j^a-Q_j)u \|_{L_2} \leq \gamma \|Q_ju \|_{L_2}, ~~~ \forall u \in
L_2(\Omega).
\end{equation}
In the limiting case $\gamma=0$, $\tilde{Q}_k$ reduces to
the exact $L_2$-projection on ${\cal S}_J$ by (\ref{id:prop1}):
$$
\tilde{Q}_k = Q_k~~ I_{k+1} Q_{k+1} \ldots I_{J-1} Q_{J-1}~~ I_J
= Q_k Q_{k+1}  \ldots Q_{J-1} = Q_k.
$$
Following~\cite{VaWa2,VaWa1},
the properties (\ref{id:prop1}), (\ref{id:prop2}), and (\ref{id:prop3})
can be verified for $\tilde{Q}_k$ as follows:

$\bullet$ Property (\ref{id:prop1}): Let $u^{(k)} \in {\cal S}_k$. Since
$(I_{j+1} - I_j)u^{(k)}=0$ and $I_j u^{(k)} = u^{(k)}$ for $k \leq j$, then
$[I_j + Q_j^a (I_{j+1} - I_j)] (u^{(k)}) = u^{(k)}$,  verifying (\ref{id:prop1})
for $\tilde{Q}_k$. It also implies
\begin{equation} \label{Pik:projection}
\tilde{Q}_k^2 = \tilde{Q}_k.
\end{equation}

$\bullet$ Property (\ref{id:prop2}): Let $k \leq l$, then by (\ref{Pik:projection})
\begin{equation} \label{verify1:id:prop2}
\tilde{Q}_k \tilde{Q}_l = [(I_k + Q_k^a (I_{k+1} - I_k)) \ldots
(I_{l-1} + Q_{l-1}^a (I_l - I_{l-1}))~\tilde{Q}_l] \tilde{Q}_l = \tilde{Q}_k.
\end{equation}
Since
$\tilde{Q}_k u \in {\cal S}_k$ and ${\cal S}_k \subset {\cal S}_l$, then by (\ref{id:prop1}) we have
\begin{equation} \label{verify2:id:prop2}
\tilde{Q}_l (\tilde{Q}_k u) = \tilde{Q}_k u.
\end{equation}
Finally, (\ref{id:prop2}) then follows from (\ref{verify1:id:prop2}) and (\ref{verify2:id:prop2}).

$\bullet$ Property (\ref{id:prop3}): This is an implication of Lemma
\ref{lemma:prop3Established}.

For an overview, we list the corresponding slice spaces for the
preconditioners of interest:
$$
\begin{array}{llll}
\mbox{HB:}  & {\cal S}_j^f & = & (I_j - I_{j-1}){\cal S}_j, \\
\mbox{BPX:} & {\cal S}_j^f & = & (Q_j - Q_{j-1}){\cal S}_j,\\
\mbox{WHB:} & {\cal S}_j^f & = & (\tilde{Q}_{j} -\tilde{Q}_{j-1}){\cal S}_j =
(I - Q_{j-1}^a)(I_j-I_{j-1}){\cal S}_j,~
\tilde{Q}_j~\mbox{as in (\ref{Wk}).}
\end{array}
$$

The WHB smoother only acts on the fine DOF. Then, in the generic multilevel 
preconditioner notation, the WHB preconditioner can be written in the
following form:
\begin{equation} \label{def:precondW}
B u := \sum_{j=0}^J B_{ff}^{(j)^{-1}} (\tilde{Q}_j - \tilde{Q}_{j-1}) u.
\end{equation}
$B_{ff}$ is chosen to be a spectrally equivalent operator to fine-fine
discretization operator $A_{ff}^{(j)}$.  Since the smoother and
property (\ref{id:prop3}) both rely on a well-conditioned
$A_{ff}^{(j)}$, we discuss this next.

\subsection{Well-conditioned $A_{ff}^{(j)}$} \label{subsec:wellCond}
The lemma below is essential to extend the existing results for quasiuniform
meshes~\cite[Lemma 6.1]{VaWa2} or~\cite[Lemma 2]{VaWa1} to the 
locally refined ones. ${\cal S}_j^{(f)} = (I_j - I_{j-1}) {\cal S}_j$ denotes
the HB slice space.

\begin{lemma} \label{lemma:prop3Established}
Let ${\cal T}_j$ be constructed by the local refinements under consideration.
Let ${\cal S}_j^f = (I- \tilde{Q}_{j-1}){\cal S}_j^{(f)}$ be the modified
hierarchical subspace where $\tilde{Q}_{j-1}$ is any $L_2$-bounded operator. Then,
there are constants $c_1$ and $c_2$ independent of $j$ such that
\begin{equation} \label{ineq:mHBequi}
c_1 \|\phi^f\|_X^2 \leq \|\psi^f\|_X^2 \leq c_2 \|\phi^f\|_X^2,~~X=H^1,L_2,
\end{equation}
holds for any $\psi^f = (I- \tilde{Q}_{j-1}) \phi^f \in {\cal S}_j^f$ with
$\phi^f \in {\cal S}_j^{(f)}$.
\end{lemma}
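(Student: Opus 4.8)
The plan is to reduce both two-sided estimates, for $X=L_2$ and $X=H^1$, to a single algebraic identity together with a short list of uniform boundedness facts that all descend from the patchwise quasiuniformity~(\ref{ineq:localRef1}). The identity I would establish first is
\[
(I_j - I_{j-1})\psi^f = \phi^f, \qquad \psi^f = (I-\tilde{Q}_{j-1})\phi^f .
\]
It holds because $I_j - I_{j-1}$ is idempotent on the hierarchical slice ${\cal S}_j^{(f)}=(I_j-I_{j-1}){\cal S}_j$ and fixes $\phi^f$ there, while it annihilates the coarse correction $\tilde{Q}_{j-1}\phi^f$, since that term lies in ${\cal S}_{j-1}$ and $(I_j-I_{j-1})|_{{\cal S}_{j-1}}=0$ (both $I_j$ and $I_{j-1}$ fix ${\cal S}_{j-1}\subset{\cal S}_j$). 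This identity is the linchpin: it realizes $\phi^f$ and $\psi^f$ as images of one another under the two operators $I-\tilde{Q}_{j-1}$ and $I_j-I_{j-1}$, so each inequality in~(\ref{ineq:mHBequi}) becomes a boundedness statement for one of them.

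For the $L_2$ estimates this is immediate. The upper bound is $\|\psi^f\|_{L_2}\le(1+\|\tilde{Q}_{j-1}\|_{L_2\to L_2})\|\phi^f\|_{L_2}$, using only the hypothesized $L_2$-boundedness of $\tilde{Q}_{j-1}$; the lower bound follows from the identity, $\|\phi^f\|_{L_2}=\|(I_j-I_{j-1})\psi^f\|_{L_2}\le c\,\|\psi^f\|_{L_2}$, where the uniform $L_2$-boundedness of $I_j-I_{j-1}$ on ${\cal S}_j$ is a consequence of $L_2$-stability of the nodal interpolants on the locally quasiuniform meshes of~(\ref{ineq:localRef1}).

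The $H^1$ estimates carry the real work, since the hypothesis grants only $L_2$-boundedness of $\tilde{Q}_{j-1}$. The device I would use is that on the fine slice the $H^1$ norm and the scaled $L_2$ norm are equivalent, $\|v\|_{H^1}\eqsim 2^{j}\|v\|_{L_2}$ for $v\in{\cal S}_j^{(f)}$; the nontrivial direction is a reverse inverse inequality $2^{j}\|v\|_{L_2}\le c\,\|v\|_{H^1}$, which I would prove by an elementwise Poincar\'e--Friedrichs argument, exploiting that every $v\in{\cal S}_j^{(f)}$ vanishes at all coarse nodes and that, by~(\ref{ineq:localRef1}), the elements carrying $\phi^f$ all have size $\eqsim 2^{-j}$ (a scaling/Bramble--Hilbert argument over the finitely many refinement patterns catalogued in~\S\ref{sec:3DRed-Green}). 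Granting this, the $H^1$ upper bound proceeds by the triangle inequality $\|\psi^f\|_{H^1}\le\|\phi^f\|_{H^1}+\|\tilde{Q}_{j-1}\phi^f\|_{H^1}$, and since $\tilde{Q}_{j-1}\phi^f\in{\cal S}_{j-1}$ the standard inverse inequality gives $\|\tilde{Q}_{j-1}\phi^f\|_{H^1}\le c\,2^{j}\|\tilde{Q}_{j-1}\phi^f\|_{L_2}\le c\,2^{j}\|\phi^f\|_{L_2}\le c\,\|\phi^f\|_{H^1}$, the last step being exactly the reverse inverse inequality applied to the fine function $\phi^f$. For the $H^1$ lower bound I would again invoke the identity, $\|\phi^f\|_{H^1}=\|(I_j-I_{j-1})\psi^f\|_{H^1}\le c\,\|\psi^f\|_{H^1}$, reducing matters to uniform $H^1$-stability of the one-level nodal interpolation $I_{j-1}:{\cal S}_j\to{\cal S}_{j-1}$, a standard local estimate on nested patchwise quasiuniform meshes in which the single-level gap between $j-1$ and $j$ keeps the constant independent of $j$.

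The step I expect to be the main obstacle is precisely this pair of $H^1$-scale inputs: the reverse inverse inequality on ${\cal S}_j^{(f)}$ and the companion uniform $H^1$-stability of $I_{j-1}$. These are the only places where one must pass beyond soft operator bounds and genuinely use the geometry of the BEK procedure; both rest on~(\ref{ineq:localRef1}) and on the fact that the refinement produces only finitely many element and patch configurations, so that the relevant constants can be computed on a fixed finite family of reference macro-elements and transported by scaling. Once these geometric inputs are secured, assembling the four one-sided bounds into~(\ref{ineq:mHBequi}) is routine.
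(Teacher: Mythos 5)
Your proof is correct, but it reaches the crucial lower bounds by a genuinely different route than the paper. The paper's key tool is the strengthened Cauchy--Schwarz inequality of~\cite{BaDu80} between ${\cal S}_{j-1}$ and the hierarchical slice ${\cal S}_j^{(f)}$, namely (\ref{equivC-S}), applied with the choice $\phi^c = -\tilde{Q}_{j-1}\phi^f$; this yields $(1-\delta^2)\|\phi^f\|_{H^1}^2 \leq \|\psi^f\|_{H^1}^2$ directly, with no stability hypothesis on $\tilde{Q}_{j-1}$ and no interpolation operator in sight. You instead rest everything on the identity $(I_j - I_{j-1})\psi^f = \phi^f$, which converts both lower bounds into uniform $L_2$- and $H^1$-stability of $I_j - I_{j-1}$ restricted to ${\cal S}_j$; that stability is true and provable elementwise (each coarse element has boundedly many shape-regular children of comparable size, drawn from finitely many refinement patterns), but it is an extra geometric lemma the paper never needs. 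The upper bounds in the two arguments are essentially identical: triangle inequality, $L_2$-boundedness of $\tilde{Q}_{j-1}$, the inverse inequality at scale $2^j$, and the reverse (``oscillatory'') inequality $\|\phi^f\|_{L_2} \leq c\,2^{-j}\|\phi^f\|_{H^1}$ on ${\cal S}_j^{(f)}$ --- and here your sketch via vanishing at coarse nodes plus scaling over the finitely many refinement configurations actually supplies a proof of a fact the paper only asserts. One caution, common to both arguments: your identity (like the paper's substitution $\phi^c = -\tilde{Q}_{j-1}\phi^f$) requires $\tilde{Q}_{j-1}$ to map into ${\cal S}_{j-1}$, which the lemma's phrase ``any $L_2$-bounded operator'' does not literally guarantee; it holds for the WHB operator (\ref{Wk}), but you should record it as a hypothesis. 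In exchange, your route buys self-containedness --- no appeal to the strengthened Cauchy--Schwarz inequality --- and a transparent inversion of $I - \tilde{Q}_{j-1}$ on the slice, while the paper's route buys an explicit constant $1-\delta^2$ and lower bounds that need no interpolation-stability input at all.
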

\begin{proof}
The Cauchy-Schwarz like inequality~\cite{BaDu80} is central to the proof: There
exists $\delta \in (0,1)$ independent of the mesh size or level $j$ such that
\begin{eqnarray} \label{equivC-S}
\hspace{-1cm}
(1- \delta^2) (\nabla \phi^f, \nabla \phi^f) & \leq &
(\nabla ( \phi^c + \phi^f),\nabla(\phi^c + \phi^f)),~~~ \forall
\phi^c \in {\cal S}_{j-1}, \phi^f \in {\cal S}_j^{(f)}. \\
\label{ineq:lemmaHelps}
\hspace{-1cm}
(1- \delta^2) \| \phi^f \|_{L_2}^2 & \leq & c | \phi^c + \phi^f |_{H^1}^2~~
(\mbox{by Poincare inequality and (\ref{equivC-S})}). 
\end{eqnarray}
Combining (\ref{equivC-S}) and (\ref{ineq:lemmaHelps}):
$
(1- \delta^2) \|\phi^f \| _{H^1} ^2 \leq
\|\phi^c + \phi^f \|_{H^1} ^2.
$
Choosing $ \phi^c = - \tilde{Q}_{j-1}  \phi^f$, we get the lower bound:
$
(1- \delta^2) \|\phi^f \| _{H^1} ^2 \leq \| \psi ^f \|_{H^1}^2.
$

Let $\Omega_j^f$  denote the support of basis functions corresponding to 
${\cal N}_j^f$. Due to nested refinement, triangulation on $\Omega_j^f$ is
quasiuniform. One can analogously introduce a triangulation hierarchy
where all the simplices are exposed to uniform refinement: 
$
{\cal T}_j^f := \{\tau \in {\cal T}_j: L(\tau) = j \} = {\cal T}_j|_{\Omega_j^f}.
$
Hence, ${\cal T}_j^f$ becomes a quasiuniform tetrahedralization and the 
inverse inequality holds for ${\cal S}_j^f$. To derive the upper bound:
The right scaling is obtained by father-son size relation, and by the inverse 
inequalities and $L_2$-boundedness of $\tilde{Q}_{j-1}$, one gets
$$
\|\psi ^f\|_{H^1} ^2 \leq c_0 2 ^{2j} \|\psi ^f\|_{L_2} ^2 
\leq c_0 2 ^{2j} \left( 1+ \| \tilde{Q}_{j-1} \|_{L_2}\right)^2 \|\phi ^f\|_{L_2} ^2
\leq c 2^{2j} \|\phi ^f\|_{L_2} ^2.
$$
The slice space ${\cal S}_j^{(f)}$ is oscillatory. Then
there exists $c$ such that
$ \| \phi^f \|_{L_2}^2 \leq c 2^{-2j} \| \phi^f \|_{H^1}^2.$
Hence, $ \| \psi ^f \|_{H^1}^2 \leq c \| \phi^f\|_{H^1}^2$.
The case for $X=L_2$ can be established similarly.
\end{proof}

Using the above tools, one can establish that $A_{ff}^{(j)}$ is 
well-conditioned. Namely, 
\begin{equation} \label{ineq:WcondA_22}
c_1 2^{2j} \leq \lambda_{j,\min}^f \leq \lambda_{j,\max}^f \leq c_2 2^{2j},
\end{equation}
where $\lambda_{j,\min}^f$ and $\lambda_{j,\max}^f$ are the smallest and 
largest eigenvalues of $A_{ff}^{(j)}$, and $c_1$ are and $c_2$ both 
independent of $j$. For details see~\cite[Lemma 4.3]{VaWa2} 
or~\cite[Lemma 3]{VaWa1}.

\section{The fundamental assumption and WHB optimality}
\label{sec:assump-stabp}
As in the BPX splitting, the main ingredient in the WHB splitting is the
$L_2$-projection. Hence, the stability of the BPX splitting is still
important in the WHB splitting. The lower bound in the BPX norm equivalence
is the {\em fundamental assumption} for the WHB preconditioner. 
Utilizing a local projection $\tilde{Q}_j$, BPX lower bound was
verified earlier for 3D local red-green (BEK) refinement procedure.
The same result easily holds for the projection $Q_j$.  Dahmen and
Kunoth~\cite{DaKu92} verified BPX lower bound for the 2D red-green
refinement procedures.

Before getting to the stability result we remark that the existing perturbation
analysis of WHB is one of the primary insights in~\cite{VaWa2,VaWa1}.
Although not observed in~\cite{VaWa2,VaWa1}, the result
does not require substantial modification for locally refined meshes.
Let
$e_j := (\tilde{Q}_j  - Q_j) u$ be the error, then the following holds.
\begin{lemma}
Let $\gamma$ be as in (\ref{ineq:gamma}). There exists an absolute $c$ 
satisfying:
\begin{equation} \label{ineq:crucial}
\sum_{j=0}^J 2^{2j} \|e_j\|_{L_2}^2 \leq
c \gamma^2 \sum_{j=0}^J 2^{2j} \|(Q_j - Q_{j-1})u\|_{L_2}^2,~~~\forall u \in {\cal S}_J.
\end{equation}
\end{lemma}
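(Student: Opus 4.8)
The plan is to derive a closed telescoped formula for the error $e_j$, estimate it by a discrete convolution bound, and then close the estimate by an absorption argument that is valid because $\gamma$ is a small fixed tolerance. First I would record the one-step factorizations of the two projection families. Writing $P_j := I_j + Q_j^a(I_{j+1}-I_j)$ and $\bar{P}_j := I_j + Q_j(I_{j+1}-I_j)$, the definition (\ref{Wk}) gives $\tilde{Q}_j = P_j \tilde{Q}_{j+1}$, while the projector identities (\ref{id:prop1})--(\ref{id:prop2}) yield $\bar{P}_j = Q_j I_{j+1}$ and hence $Q_j = \bar{P}_j Q_{j+1}$. Subtracting and inserting $\bar{P}_j \tilde{Q}_{j+1}u$ produces the recursion $e_j = (P_j - \bar{P}_j)\tilde{Q}_{j+1}u + \bar{P}_j e_{j+1}$, where $P_j - \bar{P}_j = (Q_j^a - Q_j)(I_{j+1}-I_j)$. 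Since $\tilde{Q}_J = I$ and $Q_J u = u$ on ${\cal S}_J$, the recursion starts from $e_J = 0$. Unrolling it and using the telescoping $\bar{P}_j\cdots \bar{P}_{k-1} = Q_j I_k$ together with $I_k(Q_k^a - Q_k) = Q_k^a - Q_k$ gives the clean formula $e_j = \sum_{k=j}^{J-1} Q_j (Q_k^a - Q_k) w_k$, where $w_k := (I_{k+1}-I_k)\tilde{Q}_{k+1}u \in {\cal S}_{k+1}^{(f)}$.

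Next I would estimate each summand. The tolerance bound (\ref{ineq:gamma}) gives $\|(Q_k^a - Q_k)w_k\|_{L_2} \leq \gamma\|w_k\|_{L_2}$, and $Q_j$ is an $L_2$-contraction, so $\|Q_j(Q_k^a - Q_k)w_k\|_{L_2} \leq \gamma\|w_k\|_{L_2}$. Setting $a_k := 2^k\|w_k\|_{L_2}$ and $b_j := 2^j\|e_j\|_{L_2}$, the triangle inequality yields $b_j \leq \gamma\sum_{k\geq j} 2^{-(k-j)}a_k$. The kernel $2^{-(k-j)}$ (for $k\geq j$) has row and column sums uniformly bounded by $2$, so the Schur test bounds this $\ell_2$ operator and gives $\sum_j 2^{2j}\|e_j\|_{L_2}^2 \leq 4\gamma^2\sum_k 2^{2k}\|w_k\|_{L_2}^2$. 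The key point is that the geometric decay needed here is supplied automatically by the weight mismatch $2^{2j}$ versus $2^{2k}$ with $k\geq j$, so no strengthened Cauchy--Schwarz inequality is required.

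Then I would convert the $w_k$ norms into WHB slice norms. A direct computation shows $(\tilde{Q}_{k+1}-\tilde{Q}_k)u = (I - Q_k^a)w_k$; since $Q_k^a$ is $L_2$-bounded by (\ref{ineq:gamma}), Lemma~\ref{lemma:prop3Established} (applied with the bounded operator $Q_k^a$ and $X = L_2$) gives $\|w_k\|_{L_2} \eqsim \|(\tilde{Q}_{k+1}-\tilde{Q}_k)u\|_{L_2}$. Reindexing, $\sum_k 2^{2k}\|w_k\|_{L_2}^2 \eqsim \sum_m 2^{2m}\|(\tilde{Q}_m - \tilde{Q}_{m-1})u\|_{L_2}^2$. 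Finally I would write $(\tilde{Q}_m - \tilde{Q}_{m-1})u = e_m - e_{m-1} + (Q_m - Q_{m-1})u$ and use the triangle inequality together with the level shift $\sum_m 2^{2m}\|e_{m-1}\|_{L_2}^2 = 4\sum_m 2^{2m}\|e_m\|_{L_2}^2$ to bound the WHB slice sum by $c\,(S + R)$, where $S := \sum_j 2^{2j}\|e_j\|_{L_2}^2$ and $R$ denotes the right-hand side of (\ref{ineq:crucial}). Combining the three estimates gives $S \leq C\gamma^2(S + R)$.

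The main obstacle is precisely this self-referential structure: the natural bound on $S$ reintroduces $S$ on the right-hand side. It is resolved by absorption---because $\gamma$ is a small fixed tolerance, one may assume $C\gamma^2 \leq 1/2$, whence $S \leq 2C\gamma^2 R$, which is (\ref{ineq:crucial}) with an absolute constant. The only other delicate points are the clean telescoping $\bar{P}_j\cdots\bar{P}_{k-1} = Q_j I_k$, which rests on (\ref{id:prop1})--(\ref{id:prop2}), and checking the hypotheses of Lemma~\ref{lemma:prop3Established} for $Q_k^a$; both are routine given the earlier results, and the whole argument mirrors the perturbation analysis of Vassilevski--Wang.
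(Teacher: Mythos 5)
Your proof is correct, but it necessarily takes a different route from the paper, because the paper gives no argument at all for this lemma: its ``proof'' is the citation \cite[Lemma 5.1]{VaWa2} or \cite[Lemma 1]{VaWa1}, preceded only by the remark that the Vassilevski--Wang perturbation analysis ``does not require substantial modification for locally refined meshes.'' What you have done is reconstruct that perturbation analysis in a self-contained way, and the reconstruction is sound: the factorizations $\tilde{Q}_j=P_j\tilde{Q}_{j+1}$ and $Q_j=\bar{P}_jQ_{j+1}$ (valid since $Q_jI_j=I_j$, $I_{j+1}Q_{j+1}=Q_{j+1}$, $Q_jQ_{j+1}=Q_j$), the telescoped formula $e_j=\sum_{k=j}^{J-1}Q_j(Q_k^a-Q_k)w_k$ with $e_J=0$ on ${\cal S}_J$, the Schur test for the kernel $2^{-(k-j)}$, the identity $(\tilde{Q}_{k+1}-\tilde{Q}_k)u=(I-Q_k^a)w_k$, and the final absorption all check out. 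Your route has the additional merit of making explicit what the citation hides, namely the single point at which the locally refined mesh enters: Lemma~\ref{lemma:prop3Established}, applied at level $k+1$ to the operator $Q_k^a$ --- a legitimate application, since $Q_k^a$ maps into ${\cal S}_k$ and is uniformly $L_2$-bounded by $1+\gamma$ via (\ref{ineq:gamma}), which is what that lemma's proof actually requires of its ``$L_2$-bounded operator.'' This substantiates the paper's unproved claim that the Vassilevski--Wang argument survives local refinement. Two caveats are worth recording. First, your absorption step needs $C\gamma^2\le 1/2$, i.e., $\gamma$ below an absolute threshold; the lemma's statement does not say this, but it is implicit in the paper's description of $\gamma$ as a ``small fixed tolerance'' and in the hypothesis $\gamma\in[0,\gamma_0)$ of Theorem~\ref{thm:basisStab}, and some such restriction is genuinely unavoidable: for large $\gamma$ the level-by-level products defining $\tilde{Q}_k$ in (\ref{Wk}) can amplify errors geometrically in $J-k$, so no absolute constant $c$ could make (\ref{ineq:crucial}) hold. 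Second, your claimed equivalence $\sum_k 2^{2k}\|w_k\|_{L_2}^2 \eqsim \sum_m 2^{2m}\|(\tilde{Q}_m-\tilde{Q}_{m-1})u\|_{L_2}^2$ should be stated as the one-sided bound ``$\le c$,'' since the reindexed sum omits the $m=0$ slice; only that direction is used, so nothing breaks.
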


\begin{proof}
\cite[Lemma 5.1]{VaWa2} or~\cite[Lemma 1]{VaWa1}.
\end{proof}

We arrive now at the primary result, which indicates that the
WHB slice norm is optimal on the class of locally refined meshes
under consideration.
\begin{theorem} \label{thm:basisStab}
If there exists sufficiently small $\gamma_0$ such that
(\ref{ineq:gamma}) is satisfied for $\gamma \in [0,\gamma_0)$, then
\begin{equation} \label{basisStab}
\|u\|_{\mbox{{\tiny {\rm WHB}}}}^2 = \sum_{j=0}^J 2^{2j} \|(\tilde{Q}_j - \tilde{Q}_{j-1})u\|_{L_2}^2
\eqsim \|u\|_{H^1}^2,~~~ u \in {\cal S}_J.
\end{equation}
\end{theorem}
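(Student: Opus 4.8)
The plan is to treat the WHB slice norm as a small, controllable perturbation of the BPX slice norm and then to invoke the BPX norm equivalence already established for the BEK refinement procedure. Writing $e_j := (\tilde{Q}_j - Q_j)u$ (with the convention $e_{-1}=0$), the starting point is the termwise telescoping identity
\[
(\tilde{Q}_j - \tilde{Q}_{j-1})u = (Q_j - Q_{j-1})u + (e_j - e_{j-1}).
\]
Introducing the shorthand $S_{\mathrm{W}}^2 := \sum_{j=0}^J 2^{2j}\|(\tilde{Q}_j - \tilde{Q}_{j-1})u\|_{L_2}^2$ and $S_{\mathrm{B}}^2 := \sum_{j=0}^J 2^{2j}\|(Q_j - Q_{j-1})u\|_{L_2}^2$, I would apply the Minkowski (triangle) inequality in the weighted $l_2$ sequence norm to the identity above, obtaining
\[
\bigl| S_{\mathrm{W}} - S_{\mathrm{B}} \bigr| \leq \Bigl( \sum_{j=0}^J 2^{2j}\,\|e_j - e_{j-1}\|_{L_2}^2 \Bigr)^{1/2}.
\]

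The next step is to absorb the error-difference sum back into $S_{\mathrm{B}}$. Using $\|e_j - e_{j-1}\|_{L_2} \leq \|e_j\|_{L_2} + \|e_{j-1}\|_{L_2}$ together with the geometric relation $2^{2j} = 4\cdot 2^{2(j-1)}$, a routine reindexing gives $\sum_{j=0}^J 2^{2j}\|e_j - e_{j-1}\|_{L_2}^2 \leq C_0 \sum_{j=0}^J 2^{2j}\|e_j\|_{L_2}^2$ for an absolute constant $C_0$. The perturbation estimate (\ref{ineq:crucial}) then bounds the right-hand side by $C_0\, c\, \gamma^2 S_{\mathrm{B}}^2$, so that $|S_{\mathrm{W}} - S_{\mathrm{B}}| \leq \sqrt{C_0 c}\,\gamma\, S_{\mathrm{B}}$, i.e.
\[
(1 - \sqrt{C_0 c}\,\gamma)\, S_{\mathrm{B}} \leq S_{\mathrm{W}} \leq (1 + \sqrt{C_0 c}\,\gamma)\, S_{\mathrm{B}}.
\]
Choosing $\gamma_0$ so that $\sqrt{C_0 c}\,\gamma_0 < 1$ keeps the lower constant strictly positive for every $\gamma \in [0,\gamma_0)$, whence $S_{\mathrm{W}} \eqsim S_{\mathrm{B}}$ with constants depending only on $\gamma_0$ through $C_0$ and $c$. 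Finally, since the BPX lower bound has been verified for the BEK procedure (so that $\theta_J = O(1)$ by (\ref{mainResult})), Theorem~\ref{thm:main} supplies the full BPX equivalence $S_{\mathrm{B}}^2 \eqsim \|u\|_{H^1}^2$; chaining the two equivalences yields $\sum_{j=0}^J 2^{2j}\|(\tilde{Q}_j - \tilde{Q}_{j-1})u\|_{L_2}^2 = S_{\mathrm{W}}^2 \eqsim \|u\|_{H^1}^2$, which is (\ref{basisStab}).

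I expect the only genuine obstacle to be the smallness-of-$\gamma$ bookkeeping: the product $C_0 c$ multiplying $\gamma$ must be tracked so that the admissible threshold $\gamma_0$ can be named explicitly and the lower constant $1 - \sqrt{C_0 c}\,\gamma$ does not degenerate as $J \to \infty$. Everything else in this argument—the telescoping identity, the weighted triangle inequality, and the geometric reindexing of the dyadic weights—is routine once (\ref{ineq:crucial}) is available. The substantive analytic content, namely that the perturbation estimate and the BPX lower bound survive passage from quasiuniform to locally refined BEK meshes, is precisely what the preceding perturbation lemma and the results of \S\ref{sec:optBPX} already provide, so no new geometric input is needed here.
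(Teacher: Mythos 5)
Your proof is correct, but its second half follows a genuinely different route from the paper's. The first half coincides: the paper uses exactly your telescoping identity $(\tilde{Q}_j - \tilde{Q}_{j-1})u = e_j - e_{j-1} + (Q_j - Q_{j-1})u$, the perturbation estimate (\ref{ineq:crucial}), and the BPX lower bound to conclude $S_{\mathrm{W}}^2 \le c\,\|u\|_{H^1}^2$ (its version of this step is a cruder triangle inequality yielding the constant $c(1+\gamma^2)$, with no smallness of $\gamma$ needed). For the reverse inequality $\|u\|_{H^1}^2 \lesssim S_{\mathrm{W}}^2$, however, the paper does not invert the perturbation at all: it applies the Bernstein-based Besov estimate (\ref{OswaldHelps}), valid for an \emph{arbitrary} decomposition $u = \sum_{j} u^{(j)}$ with $u^{(j)} \in {\cal S}_j$, directly to the WHB slices $u^{(j)} = (\tilde{Q}_j - \tilde{Q}_{j-1})u$, and concludes via $H^1(\Omega) \cong B_{2,2}^1(\Omega)$. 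You instead symmetrize the perturbation (weighted-$\ell_2$ Minkowski giving $|S_{\mathrm{W}} - S_{\mathrm{B}}| \le \sqrt{C_0 c}\,\gamma\, S_{\mathrm{B}}$), deduce $S_{\mathrm{B}} \le (1-\sqrt{C_0 c}\,\gamma)^{-1} S_{\mathrm{W}}$, and chain with the upper half of the full BPX equivalence (\ref{ineq:BPXnormEquiv}), which is indeed available from Theorem~\ref{thm:main}, Lemma~\ref{lemma:BernsteinHolds}, and (\ref{mainResult}). Both routes are sound, but the trade-offs differ. Your argument genuinely consumes the smallness hypothesis (you need $\gamma < 1/\sqrt{C_0 c}$, else the lower constant degenerates) and requires both halves of the BPX equivalence; in return it names an explicit threshold $\gamma_0$ and shows the WHB and BPX slice norms become near-equal as $\gamma \to 0$. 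The paper's route never exercises the smallness hypothesis (its constants merely depend on $\gamma$) and, for this direction, needs only the Bernstein inequality --- a point emphasized in the Remark following the theorem, which stresses that only the Bernstein-based half of the Besov characterization survives local refinement because Jackson fails; your chaining sidesteps that subtlety only because the Jackson-free bound $\|u\|_{H^1}^2 \le c_2 S_{\mathrm{B}}^2$ is already packaged inside Theorem~\ref{thm:main}.
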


\begin{proof}
Observe that
\begin{eqnarray} \label{id:DecompError}
(\tilde{Q}_j - \tilde{Q}_{j-1})u & = & (\tilde{Q}_j - Q_j)u - (\tilde{Q}_{j-1} - Q_{j-1})u + (Q_j - Q_{j-1})u\\
\nonumber        & = & e_j-e_{j-1}+(Q_j - Q_{j-1})u.
\end{eqnarray}
This gives
\begin{eqnarray*}
\sum_{j=0}^J 2^{2j} \|(\tilde{Q}_j - \tilde{Q}_{j-1})u \|_{L_2}^2 & \leq &
c \sum_{j=0}^J 2^{2j} \|(Q_j - Q_{j-1})u\|_{L_2}^2 +
c \sum_{j=0}^J 2^{2j} \| e_j \|_{L_2}^2 \\
& \leq & c(1+\gamma^2) \sum_{j=0}^J 2^{2j} \|(Q_j - Q_{j-1})u\|_{L_2}^2
~~\mbox{(using (\ref{ineq:crucial}))} \\
& \leq & c \|u\|_{H^1}^2.
\end{eqnarray*}
Let us now proceed with the upper bound.  The Bernstein inequality
(\ref{ineq:Bernstein}) holds for ${\cal S}_j$~\cite{Ak01phd,DaKu92}
for the local refinement procedures. Hence we are going to utilize an
inequality involving the Besov norm $\|\cdot\|_{B_{2,2}^1}$ which
naturally fits our framework when the moduli of smoothness is
considered in (\ref{ineq:Bernstein}). The following important
inequality holds, provided that (\ref{ineq:Bernstein})
holds~\cite[page 39]{Os94book}:
\begin{equation} \label{OswaldHelps}
\|u\|_{B_{2,2}^1}^2 \leq c \sum_{j=0}^J 2^{2j} \| u^{(j)} \|_{L_2}^2,
\end{equation}
for any decomposition such that $u=\sum_{j=0}^J u^{(j)},~~u^{(j)} \in {\cal S}_j$,
in particular for $u^{(j)} = (\tilde{Q}_j - \tilde{Q}_{j-1})u$. Then the upper bound holds due to $H^1(\Omega) \cong B_{2,2}^1(\Omega).$
\end{proof}

\begin{remark}
The following equivalence is used for the upper bound in the proof of Theorem
\ref{thm:basisStab} on uniformly refined meshes~\cite[Lemma 4]{VaWa1}.
$$
c_1 \|u\|_{H^1}^2 \leq \inf_{ u=\sum_{j=0}^J u^{(j)},~u^{(j)} \in {\cal S}_j }
\sum_{j=0}^J 2^{2j} \|u^{(j)}\|_{L_2}^2 \leq c_2 \|u\|_{H^1}^2.
$$ Let us emphasize that the left hand side holds in the presence of
the Bernstein inequality (\ref{ineq:Bernstein}), and the right hand side
holds in the simultaneous presence of Bernstein and Jackson
inequalities. However, the Jackson inequality cannot hold under local
refinement procedures (cf. counter example in section~\ref{sec:reason}). 
That is why we can utilize only the left hand side of the above
equivalence as in (\ref{OswaldHelps}).
\end{remark}

Now, we have all the required estimates at our disposal to
establish the optimality of WHB preconditioner for 2D/3D red-green 
refinement procedures for $p \in L_\infty(\Omega)$. We would like to
emphasize that our framework supports any spatial dimension $d \geq 1$,
provided that the necessary geometrical abstractions are in place.
\begin{theorem}
If BPX lower bound holds and if there exists sufficiently small $\gamma_0$
such that (\ref{ineq:gamma}) is satisfied for
$\gamma \in (0,\gamma_0)$, then for $B$ in (\ref{def:precondW}):
$$
(Bu, u) \eqsim \|u\|_{H^1}^2.
$$
\end{theorem}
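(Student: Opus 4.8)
The plan is to recognize $B$ as an additive subspace correction operator associated with the multilevel splitting $\mathcal{S}_J = \sum_{j=0}^J \mathcal{S}_j^f$, where $\mathcal{S}_j^f = (\tilde{Q}_j - \tilde{Q}_{j-1})\mathcal{S}_J$, equipped with the local solvers $B_{ff}^{(j)}$, and then to reduce the claimed spectral equivalence to two facts already in hand. The first is that the local solvers are spectrally equivalent to the diagonal blocks $A_{ff}^{(j)}$, whose spectra satisfy $\lambda\eqsim 2^{2j}$ by~(\ref{ineq:WcondA_22}). The second is that the splitting is stable, i.e. $\sum_{j=0}^J 2^{2j}\|(\tilde{Q}_j - \tilde{Q}_{j-1})u\|_{L_2}^2 \eqsim \|u\|_{H^1}^2$, which is exactly Theorem~\ref{thm:basisStab}. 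The whole argument is an assembly of these two inputs through the standard additive Schwarz energy identity.

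First I would record that, because the operators $\tilde{Q}_j$ satisfy $\tilde{Q}_j^2 = \tilde{Q}_j$ and $\tilde{Q}_j\tilde{Q}_k = \tilde{Q}_{\min\{j,k\}}$ (properties~(\ref{Pik:projection}) and~(\ref{id:prop2})), the slice operators $P_j := \tilde{Q}_j - \tilde{Q}_{j-1}$ are themselves projections with $P_jP_k = 0$ for $j\neq k$ and $\sum_{j=0}^J P_j = \tilde{Q}_J - \tilde{Q}_{-1} = I$. Consequently the decomposition $\mathcal{S}_J = \bigoplus_{j=0}^J \mathcal{S}_j^f$ is direct, and every $u\in\mathcal{S}_J$ has the unique representation $u = \sum_j w_j$ with $w_j = P_j u \in \mathcal{S}_j^f$. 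This removes the usual infimum over admissible splittings in the additive Schwarz characterization: the energy of the preconditioner is pinned to this single canonical decomposition.

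Next I would pass from the local solvers to explicit level scalings. Since $B_{ff}^{(j)}$ is spectrally equivalent to $A_{ff}^{(j)}$ and the latter obeys~(\ref{ineq:WcondA_22}), one has $(B_{ff}^{(j)} w_j, w_j) \eqsim 2^{2j}\|w_j\|_{L_2}^2$ uniformly in $j$ for $w_j\in\mathcal{S}_j^f$. Summing over $j$ and invoking the direct decomposition yields
\begin{equation*}
\sum_{j=0}^J (B_{ff}^{(j)} P_j u, P_j u) \eqsim \sum_{j=0}^J 2^{2j}\|(\tilde{Q}_j - \tilde{Q}_{j-1})u\|_{L_2}^2 = \|u\|_{\mbox{{\tiny {\rm WHB}}}}^2,
\end{equation*}
and Theorem~\ref{thm:basisStab} identifies the right-hand side with $\|u\|_{H^1}^2$ up to constants. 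Assembling this with the additive Schwarz identity that expresses the quadratic form $(Bu,u)$ through the level-wise forms $\sum_j (B_{ff}^{(j)} P_j u, P_j u)$ then delivers $(Bu,u)\eqsim\|u\|_{H^1}^2$.

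The step I expect to be the main obstacle is the rigorous passage from $(Bu,u)$ to the level-wise sum, because the WHB splitting, while algebraically orthogonal ($P_jP_k=0$), is not $L_2$-orthogonal: the projections $\tilde{Q}_j$ are built from the non-self-adjoint interpolants $I_j$ through~(\ref{Wk}). The mixed terms coupling distinct levels therefore do not vanish and must be absorbed with constants independent of $J$. This is exactly where the strengthened Cauchy-Schwarz inequality~(\ref{equivC-S}) underlying Lemma~\ref{lemma:prop3Established}, together with the $L_2$-boundedness of the $\tilde{Q}_j$ enforced by~(\ref{ineq:gamma}) for $\gamma<\gamma_0$, enters: it bounds the spectral radius of the level-interaction matrix by a constant strictly less than the number of levels, so that the off-diagonal contributions are controlled by the diagonal ones uniformly in $J$. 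Once this interaction estimate is in place, the two-sided bound closes, and because every ingredient (the geometric generation bounds of Section~\ref{sec:3DRed-Green}, the Bernstein inequality of Section~\ref{sec:optBPX}, and Theorem~\ref{thm:basisStab}) is stated dimension-independently, the optimality of $B$ follows for every spatial dimension $d\geq 1$ in which the required geometric abstractions hold.
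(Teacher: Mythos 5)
Your proposal is correct and takes essentially the same route as the paper: the paper's own (very terse) proof is exactly the assembly you describe, namely the spectral equivalence $B_{ff}^{(j)} \eqsim A_{ff}^{(j)} \eqsim 2^{2j} I$ from (\ref{ineq:WcondA_22}) combined with the norm equivalence of Theorem~\ref{thm:basisStab}. The only point where you diverge is your final paragraph, and there the ``main obstacle'' you anticipate is illusory --- indeed your own second paragraph already dissolves it. Since $P_j P_k = \delta_{jk}P_j$ and $\sum_{j=0}^J P_j = I$, the splitting into the slice spaces $(\tilde{Q}_j-\tilde{Q}_{j-1})\mathcal{S}_J$ is direct, so for the self-adjoint additive Schwarz operator built from the solvers $B_{ff}^{(j)}$ (with restrictions taken as $L_2$-adjoints of the inclusions, which is the reading under which the statement $(Bu,u)\eqsim\|u\|_{H^1}^2$ is meaningful, given that (\ref{def:precondW}) as literally written would carry the inverted scaling $2^{-2j}$) the quadratic form is \emph{exactly} the level-wise sum $\sum_{j}(B_{ff}^{(j)}P_j u, P_j u)$: the infimum over admissible decompositions collapses to the unique canonical one, and no cross-level terms survive. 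Consequently the strengthened Cauchy--Schwarz inequality (\ref{equivC-S}) plays no role in the assembly; it is needed only inside Lemma~\ref{lemma:prop3Established} to establish the well-conditioning (\ref{ineq:WcondA_22}) of $A_{ff}^{(j)}$, which you are already taking as an input.
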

\begin{proof}
$B_{ff}^{(j)}$ is spectrally equivalent to $A_{ff}^{(j)}$. Since
$A_{ff}^{(j)}$ is a well-conditioned matrix, using
(\ref{ineq:WcondA_22}) it is spectrally equivalent to $2^{2j} I$.  The
result follows from Theorem \ref{thm:basisStab}.
\end{proof}

An extension to multiplicative WHB preconditioner is also possible under 
additional assumptions. These results will not be reported here.

\section{$H^1$-stable $L_2$-projection} \label{sec:H1StableL2}

The involvement of $\tilde{Q}_j$ in the multilevel decomposition
makes it the most crucial element in the stabilization.
We then come to the central question: Which choice of $\tilde{Q}_j$
can provide an optimal preconditioner? 
The following theorem sets a guideline for picking $\tilde{Q}_j$.
It shows that $H^1$-stability of the $\tilde{Q}_j$ is actually
a {\em necessary condition} for obtaining an optimal preconditioner.

\begin{theorem} \label{thm:stabCriterion}
\cite{VaWa2,VaWa1}.
If $\tilde{Q}_j$ induces an optimal preconditioner, namely for 
$u \in {\cal S}_J$, 
$\sum_{j=0}^J 2^{2j} \|(\tilde{Q}_{j} - \tilde{Q}_{j-1})u\|_{L_2}^2 
\eqsim \|u\|_{H^1}^2$, 
then there exists an absolute constant $c$ such that
$$
\|\tilde{Q}_k u \|_{H^1} \leq c~\| u \|_{H^1},~~~\forall k \leq J.
$$
\end{theorem}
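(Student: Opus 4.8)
The plan is to apply the assumed norm equivalence not to $u$ itself but to the function $w := \tilde{Q}_k u$, and then to exploit the projection--commutation identity (\ref{id:prop2}) to collapse the resulting slice sum. First I would observe that $\tilde{Q}_k$ maps $L_2$ into ${\cal S}_k$ and fixes ${\cal S}_k$ by (\ref{id:prop1}), so it is a projection onto ${\cal S}_k$; in particular $w = \tilde{Q}_k u \in {\cal S}_k \subseteq {\cal S}_J$, and the hypothesis applies verbatim to $w$. Writing $c_1, c_2$ for the equivalence constants, the lower bound then gives
\[
c_1 \|\tilde{Q}_k u\|_{H^1}^2 \leq \sum_{j=0}^J 2^{2j} \|(\tilde{Q}_j - \tilde{Q}_{j-1}) \tilde{Q}_k u\|_{L_2}^2.
\]

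The key step is to evaluate the composite slice operators $(\tilde{Q}_j - \tilde{Q}_{j-1})\tilde{Q}_k$ using (\ref{id:prop2}), namely $\tilde{Q}_j \tilde{Q}_k = \tilde{Q}_{\min\{j,k\}}$. For $j \leq k$ one has $\tilde{Q}_j\tilde{Q}_k = \tilde{Q}_j$ and $\tilde{Q}_{j-1}\tilde{Q}_k = \tilde{Q}_{j-1}$, so the slice is unchanged; for $j > k$ both $\tilde{Q}_j\tilde{Q}_k$ and $\tilde{Q}_{j-1}\tilde{Q}_k$ reduce to $\tilde{Q}_k$ and the slice vanishes identically. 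Consequently the sum truncates,
\[
\sum_{j=0}^J 2^{2j} \|(\tilde{Q}_j - \tilde{Q}_{j-1}) \tilde{Q}_k u\|_{L_2}^2 = \sum_{j=0}^k 2^{2j} \|(\tilde{Q}_j - \tilde{Q}_{j-1}) u\|_{L_2}^2.
\]

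Finally I would bound the truncated sum by the full sum over $j = 0, \ldots, J$ (all dropped terms being nonnegative) and apply the upper bound of the hypothesis to $u$ itself:
\[
\sum_{j=0}^k 2^{2j} \|(\tilde{Q}_j - \tilde{Q}_{j-1}) u\|_{L_2}^2 \leq \sum_{j=0}^J 2^{2j} \|(\tilde{Q}_j - \tilde{Q}_{j-1}) u\|_{L_2}^2 \leq c_2 \|u\|_{H^1}^2.
\]
Chaining the three displays yields $c_1 \|\tilde{Q}_k u\|_{H^1}^2 \leq c_2 \|u\|_{H^1}^2$, i.e. the claimed stability with $c = \sqrt{c_2/c_1}$, uniform in $k$ and $J$.

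The argument is essentially formal once the commutation identity is in hand, so there is no genuine analytic obstacle; the one place demanding care is the case analysis showing $(\tilde{Q}_j - \tilde{Q}_{j-1})\tilde{Q}_k = 0$ for $j > k$, which must be checked separately against (\ref{id:prop2}) for $j = k+1$ (where $\min\{j-1,k\} = k$) and for $j \geq k+2$. I would also note that only the lower bound of the equivalence is used on $\tilde{Q}_k u$ while only the upper bound is used on $u$, which is precisely why this is the natural direction of the implication: the reverse passage, from $H^1$-stability back to the full norm equivalence, would require additional structure (e.g. a Jackson-type estimate, which is unavailable here).
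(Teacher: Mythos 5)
Your proof is correct and is essentially the paper's own argument: the paper likewise applies the lower bound of the assumed equivalence to $\tilde{Q}_k u$, uses (\ref{id:prop2}) to identify the slices of $\tilde{Q}_k u$ with $(\tilde{Q}_j-\tilde{Q}_{j-1})u$ for $j\le k$ (vanishing for $j>k$), extends the truncated sum to $j\le J$, and closes with the upper bound applied to $u$, giving $c=\sqrt{\sigma_2/\sigma_1}$. The only difference is presentational: you spell out the case analysis behind the collapse of the composite slice operators, which the paper leaves implicit in the line ``Using (\ref{defn:optPrecond}) for $\tilde{Q}_k u$.''
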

\begin{proof}
Using the multilevel decomposition and (\ref{id:prop2}), we get:\\
$
\tilde{Q}_k u = \sum_{j=0}^k (\tilde{Q}_j - \tilde{Q}_{j-1})u.
$
Since $\tilde{Q}_j$ induces an optimal preconditioner, there exist two
absolute constants $\sigma_1$ and $\sigma_2$:
\begin{equation} \label{defn:optPrecond}
\sigma_1 \|u\|_{H^1}^2 \leq 
\sum_{j=0}^J 2^{2j} \|(\tilde{Q}_{j} - \tilde{Q}_{j-1})u\|_{L_2}^2 \leq 
\sigma_2 \|u\|_{H^1}^2, ~~~\forall u \in \calg{S}_J.
\end{equation}
Using (\ref{defn:optPrecond}) for $\tilde{Q}_k u$:
$$
\|\tilde{Q}_k u \|_{H^1}^2 \leq \frac{1}{\sigma_1} \sum_{j=0}^k 2^{2j}
\|(\tilde{Q}_{j} - \tilde{Q}_{j-1})u\|_{L_2}^2
\leq \frac{1}{\sigma_1} \sum_{j=0}^J 2^{2j}
\|(\tilde{Q}_{j} - \tilde{Q}_{j-1})u\|_{L_2}^2
\leq  \frac{\sigma_2}{\sigma_1} \|u\|_{H^1}^2.
$$
\end{proof}

As a consequence of Theorem~\ref{thm:stabCriterion} we have
\begin{corollary}
$L_2$-projection restricted to ${\cal S}_j$, 
$Q_j |_{{\cal S}_j}: L_2 \rightarrow {\cal S}_j$, is $H^1$-stable on 2D and 3D
locally refined meshes by red-green refinement procedures.
\end{corollary}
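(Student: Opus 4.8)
The plan is to obtain the corollary as an immediate specialization of Theorem~\ref{thm:stabCriterion} to the exact $L_2$-projection, i.e.\ by taking $\tilde{Q}_j = Q_j$. The only thing that must be checked is that $Q_j$ meets the hypotheses of that theorem, namely that it satisfies the semigroup property (\ref{id:prop2}) and that it induces an optimal preconditioner in the sense of (\ref{defn:optPrecond}).

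First I would verify that the family $\{Q_j\}$ of $L_2$-projections onto the nested spaces ${\cal S}_0 \subset {\cal S}_1 \subset \cdots$ satisfies (\ref{id:prop1}) and (\ref{id:prop2}). Property (\ref{id:prop1}) is immediate since $Q_j$ acts as the identity on ${\cal S}_j$. For (\ref{id:prop2}), if $j \leq k$ then $Q_j Q_k = Q_j$ because $Q_k$ fixes the larger space before $Q_j$ projects down, while $Q_k Q_j = Q_j$ because $Q_j u \in {\cal S}_j \subset {\cal S}_k$ is already fixed by $Q_k$; hence $Q_j Q_k = Q_{\min\{j,k\}}$. In particular the telescoping identity $Q_k u = \sum_{j=0}^k (Q_j - Q_{j-1})u$ used in the proof of Theorem~\ref{thm:stabCriterion} holds verbatim with $\tilde{Q}_j$ replaced by $Q_j$.

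Next I would record that $Q_j$ does induce an optimal preconditioner on the 2D and 3D red-green refined hierarchies, namely
$$
\sum_{j=0}^J 2^{2j}\|(Q_j - Q_{j-1})u\|_{L_2}^2 \eqsim \|u\|_{H^1}^2, \qquad u \in {\cal S}_J.
$$
This is precisely the BPX norm equivalence (\ref{ineq:BPXnormEquiv}) for $Q_j$. Its upper bound follows from the Bernstein inequality of Lemma~\ref{lemma:BernsteinHolds} through Theorem~\ref{thm:main}, and its lower bound is supplied by the estimate $\theta_J = O(1)$ established directly in \S\ref{sec:reason} (cf.~(\ref{mainResult})); for the 2D case the same equivalence is the Dahmen--Kunoth result~\cite{DaKu92}. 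Thus $Q_j$ satisfies (\ref{defn:optPrecond}) with $\tilde{Q}_j = Q_j$.

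Finally, applying Theorem~\ref{thm:stabCriterion} with $\tilde{Q}_j = Q_j$ yields an absolute constant $c$, independent of both $k$ and $J$, such that $\|Q_k u\|_{H^1} \leq c\,\|u\|_{H^1}$ for every $u \in {\cal S}_J$ and every $k \leq J$. Since $c$ does not depend on $J$ and $\bigcup_J {\cal S}_J$ is dense in $H_0^1(\Omega)$, this bound extends to all of $H^1$, which is exactly the asserted $H^1$-stability of the $L_2$-projection on the red-green refined meshes. I do not expect a genuine obstacle here: the heavy geometric and approximation-theoretic work has already been spent in establishing the BPX norm equivalence for $Q_j$, so the only point requiring care is confirming that the \emph{exact} projection $Q_j$ (rather than the local $\tilde{Q}_j$ of \S\ref{sec:WHBprecond}) satisfies the hypotheses of Theorem~\ref{thm:stabCriterion}, and this reduces to the elementary projection identities above together with the already-established norm equivalence.
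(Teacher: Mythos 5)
Your core argument is exactly the paper's own proof: the paper likewise obtains the corollary by applying Theorem~\ref{thm:stabCriterion} with $\tilde{Q}_j = Q_j$, the hypothesis (\ref{defn:optPrecond}) being supplied by the already-established BPX norm equivalence (\ref{ineq:BPXnormEquiv}) (Lemma~\ref{lemma:BernsteinHolds} and Theorem~\ref{thm:main} for the upper bound, $\theta_J = O(1)$ from \S\ref{sec:reason} for the lower bound, and \cite{DaKu92} in 2D); your verification of the projection identities (\ref{id:prop1})--(\ref{id:prop2}) for the exact $Q_j$ is the right thing to check and is correct. The paper additionally records a second route—apply Theorem~\ref{thm:stabCriterion} to the WHB operators $\tilde{Q}_j$ of (\ref{Wk}) via Theorem~\ref{thm:basisStab} and pass to the limiting case $\gamma = 0$ where $\tilde{Q}_j$ reduces to $Q_j$—but your single route suffices.

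However, your final step is a genuine error. You claim that since $c$ is independent of $J$ and $\bigcup_J {\cal S}_J$ is dense in $H_0^1(\Omega)$, the bound $\|Q_k u\|_{H^1} \leq c\|u\|_{H^1}$ extends to all of $H^1$, and that this unrestricted bound ``is exactly the asserted stability.'' Neither part is right. First, the density claim fails for genuinely local refinement: by construction only the tetrahedra inside the shrinking refinement regions $\Omega_J$ of (\ref{eqn:omega}) are ever subdivided, so away from these regions every function in $\bigcup_J {\cal S}_J$ is piecewise linear on the fixed coarse mesh ${\cal T}_0$, and such functions cannot approximate arbitrary elements of $H_0^1(\Omega)$. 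Second, the corollary does not assert the unrestricted bound: as the paper stresses immediately after its proof, ``$H^1$-stability of $L_2$-projection is guaranteed for the subset ${\cal S}_j$ of $L_2(\Omega)$, not for all of $L_2(\Omega)$''---indeed, unrestricted $H^1$-stability on locally refined meshes is precisely the difficult question treated \emph{a posteriori} in \cite{Ca00,BrPaSt00}, and your density argument would resolve it in one line if it were valid. The fix is simply to delete that closing sentence: the statement proved in your penultimate sentence ($\|Q_k u\|_{H^1} \leq c\|u\|_{H^1}$ for all $u \in {\cal S}_J$, $k \leq J$, with $c$ independent of $J$) is already the full content of the corollary, and with that your proof coincides with the paper's.
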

\begin{proof}
Optimality of the BPX norm equivalence on the above locally refined meshes
was already established.
Application of Theorem \ref{thm:stabCriterion} with $Q_j$ proves the result.
Alternatively, the same result can be obtained through Theorem
\ref{thm:stabCriterion} applied to the WHB framework.
Theorem \ref{thm:basisStab} will  establish the optimality of the WHB 
preconditioner for the local refinement procedures. 
Hence, the operator $\tilde{Q}_j$ restricted to ${\cal S}_j$ is $H^1$-stable.
Since $\tilde{Q}_j$ is none other than $Q_j$ in the limiting case, we can also
conclude the $H^1$-stability of the $L_2$-projection.
\end{proof}

Our stability result appears to be the first {\em a priori} $H^1$-stability
for the $L_2$-projection on these classes of locally refined meshes.
$H^1$-stability of $L_2$-projection is guaranteed for the subset
${\cal S}_j$ of $L_2(\Omega)$, not for all of $L_2(\Omega)$.
This question is currently undergoing intensive study in the finite element 
and approximation theory community. The existing theoretical results, 
mainly in~\cite{BrPaSt00,Ca00}, involve {\em a posteriori} verification of 
somewhat complicated mesh conditions after refinement has taken place. If 
such mesh conditions are not satisfied, one has to redefine the mesh.
The mesh conditions mentioned require that the simplex
sizes do not change drastically between regions of refinement.
In this context, quasiuniformity in the support of a basis function becomes 
crucial. This type of local quasiuniformity  is usually called as 
{\em patchwise quasiuniformity}. Local quasiuniformity requires 
neighbor generation relations as in (\ref{ineq:face-adjacent}), 
neighbor size relations, and shape regularity
of the mesh. It was shown in~\cite{Ak01phd} that patchwise quasiuniformity 
holds also for 3D marked tetrahedron bisection~\cite{JoLi95} and for 2D newest
vertex bisection~\cite{Mi88,Se72}. These are then promising refinement 
procedures for which $H^1$-stability of the $L_2$-projection can be 
established.

\section{Conclusion}\label{sec:conclusion}

In this article, we examined the Bramble-Pasciak-Xu (BPX) norm
equivalence in the setting of local 3D mesh refinement.  In
particular, we extended the 2D optimality result for BPX due to Dahmen
and Kunoth to the local 3D red-green refinement procedure introduced
by Bornemann-Erdmann-Kornhuber (BEK).  The extension involved
establishing that the locally enriched finite element subspaces
produced by the BEK procedure allow for the construction of a scaled
basis which is formally Riesz stable.  This in turn rested entirely on
establishing a number of geometrical relationships between neighboring
simplices produced by the local refinement algorithms.  We remark
again that shape regularity of the elements produced by the refinement
procedure is insufficient to construct a stable Riesz basis for finite
element spaces on locally adapted meshes.  The $d$-vertex adjacency
generation bound for simplices in $\Re^d$ is the primary result
required to establish patchwise quasiuniformity for stable Riesz basis
construction, and this result depends delicately on the particular
details of the local refinement procedure rather than on shape
regularity of the elements.  We also noted in \S\ref{sec:3DRed-Green}
that these geometrical properties have been established
in~\cite{Ak01phd} for purely bisection-based refinement procedures
that have been shown to be asymptotically non-degenerate, and
therefore also allow for the construction of a stable Riesz basis.

We also examined the wavelet modified hierarchical basis (WHB) methods
of Vassilevski and Wang, and extended their original
quasiuniformity-based framework and results to local 2D and 3D
red-green refinement scenarios.  A critical step in the extension
involved establishing the optimality of the BPX norm equivalence for the
local refinement procedures under consideration, as established
in the first part of this article.  With the local refinement
extension of the WHB analysis framework presented here, we established
the optimality of the WHB preconditioner on locally refined meshes in
both 2D and 3D under the minimal regularity assumptions required for
well-posedness.  An interesting implication of the optimality of WHB
preconditioner was the {\em a priori} $H^1$-stability of the
$L_2$-projection.  Existing {\em a posteriori} approaches in the
literature dictate a reconstruction of the mesh if such conditions
cannot be satisfied.

The theoretical framework established here supports arbitrary spatial
dimension $d \geq 1$, and therefore allows extension of the optimality
results, the $H^1$-stability of $L_2$-projection results, and the
various supporting results to arbitrary $d \geq 1$.  We indicated
clearly which geometrical properties must be re-established to show
BPX optimality for spatial dimension $d \ge 4$.  All of the results
here require no smoothness assumptions on the PDE coefficients beyond
those required for well-posedness in $H^1$.

To address the practical computational complexity of implementable
versions of the BPX and WHB preconditioners, we indicated how the
number of degrees of freedom used for the smoothing step can be shown
to be bounded by a constant times the number of degrees of freedom
introduced at that level of refinement.  This indicates that practical
implementable versions of the BPX and WHB preconditioners for the
local 3D refinement setting considered here have provably optimal
(linear) computational complexity per iteration.  A detailed analysis
of both the storage and per-iteration computational complexity
questions arising with BPX and WHB implementations can be found in the
second article~\cite{AkBoHo03}.

\section*{Acknowledgments}
The authors thank R.~Bank, P.~Vassilevski, and J.~Xu for many
enlightening discussions.

\bibliographystyle{siam}
\bibliography{m}


\vspace*{0.5cm}

\end{document}